\newtheorem{satz}{Theorem}
\newtheorem{theorem}[satz]{Theorem}
\newtheorem{lemma}[satz]{Lemma}
\newtheorem{corollary}[satz]{Corollary}
\newtheorem{remark}[satz]{Remark}
\newtheorem{example}[satz]{Example}
\def\eps{\varepsilon}
\def\_phi{\varphi}
\def\a{\alpha}
\def\d{\delta}
\def\la{\lambda}
\def\ind{{\rm ind}}
\def\v{\vec}
\def\F{{\mathbb F}}
\def\t{\tilde}
\def\o{\omega}
\def\C{{\mathbb C}}
\def\R{{\mathbb R}}
\def\E{\mathsf {E}}
\def\T{{\mathbb T}}
\def\Z_N{{\mathbb Z}_N}
\def\Z{{\mathbb Z}}
\def\Gr{{\mathbf G}}
\def\D{{\mathbb D}}
\def\tr{{\rm tr\,}}
\def\FF{\widehat}
\def\D{\Delta}
\def\Cf{{\mathcal C}}
\def\T{\mathsf {T}}
\def\P{{\cal P}}
\def\L{{\cal L}}
\def\I{{\cal I}}
\def\SL{{\rm SL\,}}
\def\Aff{{\rm Aff\,}}
\def\Stab{{\rm Stab\,}}
\def\HH{{\mathcal H}}
\author{Shkredov I.D.}
\title{Some remarks on products of sets in the Heisenberg group and in the affine group 
	\footnote{This work is supported by the Russian Science Foundation under grant 19--11--00001.}
}
\date{}
\begin{document}
	\maketitle

\begin{center}
	Annotation.
\end{center}

{\it \small
	We obtain some new results on products of large and small sets in the  Heisenberg group as well as in the affine group over the prime field. 
	Also, we derive an application of these growth results to Freiman's isomorphism in nonabelian groups. 	
}
\\

	\section{Introduction}
	\label{sec:introduction}

	Let $p$ be an odd prime number, and $\F_p$ be the finite field.
	Given two sets $A,B\subset \F_p$, define the  \textit{sumset}, the \textit{product set} and the \textit{quotient set} of $A$ and $B$ as 
	$$A+B:=\{a+b ~:~ a\in{A},\,b\in{B}\}\,,$$
	$$AB:=\{ab ~:~ a\in{A},\,b\in{B}\}\,,$$
	and 
	$$A/B:=\{a/b ~:~ a\in{A},\,b\in{B},\,b\neq0\}\,,$$
	correspondingly.
	This paper is devoted to the so--called {\it sum--product phenomenon},
	which says that either the sumset or the product set of a set must be large up to some natural algebraic constrains.  
	One of 
	the strongest form of this principle is the Erd\H{o}s--Szemer\'{e}di  conjecture \cite{ES},
	which says that for any sufficiently large set $A$ of real numbers and an arbitrary $\epsilon>0$ one has
	\begin{equation*}\label{f:sum-product_ES}
	\max{\{|A+A|,|AA|\}}\gg{|A|^{2-\epsilon}} \,.
	\end{equation*}
	The best up to date results in the direction 
	can be found in \cite{Shakan}
	and in 
	\cite{RSS} 
	for  $\R$ and $\F_p$, respectively. 
	Basically, in this 
	paper we restrict ourselves to the case of the finite fields only.

	It is well--known that the sum--product phenomenon is connected with growth in the group of affine transformations, see, e.g., \cite{Brendan_rich}, \cite{RS_SL2}.
	Another group which is connected to this area is the Heisenberg group $\HH$ of $3\times 3$ unipotent matrices and this case was considered in papers  \cite{HH_isom}---\cite{HH_sum-prod_HH} as well as in a more general context, see \cite{BGreenII} and \cite{HaH}, say.  
	For example, in \cite{HH_sum-prod_HH} the following result was obtained.
	
	\begin{theorem}
		Let $A\subset \R$ be a set and 
		\[
		\HH \supseteq \mathcal{A} := \left\{
		\left( {\begin{array}{ccc}
			1 & a & 0\\
			0 & 1 & b \\
			0 & 0 & 1 \\
			\end{array} } \right) ~:~ a,b \in A  \right\} \,.
		\]
		Then for any $\eps>0$ one has 
		\begin{equation}\label{f:HH_sum-prod}
		|\mathcal{A} \mathcal{A}| \ge |A|^2 \max \{ |AA|, |A+A| \} \gg_\eps |A|^{3+7/22 - \eps} \,.  
		\end{equation}
		\label{t:HH_sum-prod}
	\end{theorem}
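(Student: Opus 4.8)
The plan is to make the group law in $\HH$ explicit and then reduce everything to the ordinary sum--product phenomenon on $\R$. Write $M(a,b,c)$ for the unipotent matrix having $a$ in position $(1,2)$, $b$ in position $(2,3)$ and $c$ in position $(1,3)$, so that $\mathcal{A}=\{M(a,b,0):a,b\in A\}$. A direct computation of the matrix product gives the Heisenberg multiplication law
\[
M(a_1,b_1,0)\cdot M(a_2,b_2,0)=M(a_1+a_2,\,b_1+b_2,\,a_1b_2)\,,
\]
so that $\mathcal{A}\mathcal{A}$ is in bijection with the set of triples $(a_1+a_2,\,b_1+b_2,\,a_1b_2)$ with $a_1,a_2,b_1,b_2\in A$, and it suffices to count these triples.

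For the combinatorial bound I would prove the two inequalities $|\mathcal{A}\mathcal{A}|\ge |A|^2|A+A|$ and $|\mathcal{A}\mathcal{A}|\ge |A|^2|AA|$ separately, taking the maximum at the end. Discarding $0$ from $A$ changes all cardinalities by at most one and is absorbed into $\gg_\eps$, so I may assume $0\notin A$. To get the first inequality, fix $s\in A+A$ together with one representation $s=a_1+a_2$; since $a_1\neq 0$, the map $(b_1,b_2)\mapsto(b_1+b_2,\,a_1b_2)$ is injective on $A\times A$, because one recovers $b_2=(a_1b_2)/a_1$ and then $b_1$. This produces $|A|^2$ distinct triples all having first coordinate $s$, and triples attached to different values of $s$ are automatically distinct, giving $|A|^2|A+A|$ in total. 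For the second inequality, fix instead $p\in AA$ and a representation $p=a_1b_2$; now the map $(a_2,b_1)\mapsto(a_1+a_2,\,b_1+b_2)$ is injective, yielding $|A|^2$ distinct triples with fixed third coordinate $p$, hence $|A|^2|AA|$ after summing over $p$. The only point requiring care here is this injectivity, which is exactly what prevents the fibres from overcounting and thus keeps the clean factor $|A|^2$.

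It remains to bound $\max\{|AA|,|A+A|\}$ from below, and this is where the single genuinely deep input enters. Any real sum--product estimate of the shape $\max\{|A+A|,|AA|\}\gg_\eps |A|^{1+7/22-\eps}$ finishes the proof after multiplication by $|A|^2$; in fact Solymosi's classical bound $\max\{|A+A|,|AA|\}\gg |A|^{4/3}/\log^{1/3}|A|$ already suffices, since $1+\tfrac13>1+\tfrac{7}{22}$, and the sharper estimates of \cite{Shakan} only lower the exponent $7/22$ further. Thus the Heisenberg part of the argument is elementary and the whole difficulty is outsourced to the sum--product theorem; I expect no obstacle beyond correctly setting up the injective fibre maps above and the harmless removal of $0$.
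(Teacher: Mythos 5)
Your argument is correct, but there is nothing in the paper to compare it with line by line: Theorem \ref{t:HH_sum-prod} is not proved in this paper at all --- it is quoted as background from \cite{HH_sum-prod_HH}, and the paper's own contribution for this configuration is the stronger Theorem \ref{t:H_sum-prod_intr}. Your reduction --- the group law $[a_1,b_1,0]\cdot[a_2,b_2,0]=[a_1+a_2,\,b_1+b_2,\,a_1b_2]$, then, for each $s\in A+A$ (resp.\ $u\in AA$) with one fixed representation, the injectivity of $(b_1,b_2)\mapsto(b_1+b_2,a_1b_2)$ (resp.\ of $(a_2,b_1)\mapsto(a_1+a_2,b_1+b_2)$) on $A\times A$, the fibres being separated by the first (resp.\ third) coordinate --- is exactly the elementary counting that the intermediate inequality in \eqref{f:HH_sum-prod} encodes, and your sum--product input is also fine: since $\tfrac{1}{3}>\tfrac{7}{22}$, Solymosi's bound \cite{soly} closes the proof and in fact yields the stronger $|\mathcal{A}\mathcal{A}|\gg_\eps|A|^{10/3-\eps}$; the exponent $7/22$ in the statement merely reflects the particular sum--product input used in \cite{HH_sum-prod_HH}. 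The paper attacks the same set quite differently: Theorem \ref{t:energy_bricks_Heisenberg} (with $Z=\{0\}$, i.e.\ Corollary \ref{cor:energy_bricks_Heisenberg_HH}) bounds the number of solutions to the energy system \eqref{f:system_H} by incidence technology (Theorems \ref{t:line/point_as} and \ref{t:Misha+}), after which Cauchy--Schwarz, $|\mathcal{A}\mathcal{A}|\ge|\mathcal{A}|^4/\E(\mathcal{A})$, gives $|\mathcal{A}\mathcal{A}|\gg|\mathcal{A}|^{7/4+c}=|A|^{7/2+2c}$ over $\R$ and $\gg\min\{|\mathcal{A}|^{7/4},p|\mathcal{A}|\}$ over $\F_p$. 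So your route buys simplicity and the clean fibrewise inequality, while the energy route buys better exponents, bounds for general bricks $[X,Y,Z]$, and the finite--field case.

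One small caveat. Removing $0$ from $A$ does prove the final bound $\gg_\eps|A|^{3+7/22-\eps}$, but when $0\in A$ it does not literally establish the middle inequality $|\mathcal{A}\mathcal{A}|\ge|A|^2\max\{|AA|,|A+A|\}$ of \eqref{f:HH_sum-prod}. The product half is unaffected (its injectivity never divides by $a_1$), and in the sum half every $s\neq0$ in $A+A$ still admits a representation with $a_1\neq0$ (if $s=0+a_2$, use $s=a_2+0$ instead); the only possibly deficient fibre is $s=0$ in the case $A\cap(-A)\subseteq\{0\}$, and one can check, using $|A+A|\ge2|A|-1$ together with the additional triples $(s,t,0)$, $t\in A+A$, lying over each $s\in A\setminus\{0\}$, that the shortfall over $s=0$ is recovered, so the inequality does hold verbatim --- but this extra step needs saying.
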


	Thus formula \eqref{f:HH_sum-prod} shows that the products in $\HH$ are directly connected with the sum--product quantities  $AA$ and $A+A$ 
	similar 
	as the products of sets in the affine group.
	Nevertheless, in a certain sense the affine group is more correlates with the multiplication and the Heisenberg group correlates with the addition, see the discussion of trivial representations in Section \ref{sec:large_sets}.

	We improve Theorem \ref{t:HH_sum-prod} and, moreover, generalize it for so--called {\it bricks}, see Theorem \ref{t:energy_bricks_Heisenberg} in Section \ref{sec:proof}.

	\begin{theorem}
		Under the same conditions as in Theorem \ref{t:HH_sum-prod} one has 
		\[
		|\mathcal{A} \mathcal{A}| \gg |\mathcal{A}|^{7/4+c} \,,
		\]
		where $c>0$ is an absolute constant. 
		Moreover, if $A\subseteq \F_p$, then 
		\begin{equation}\label{f:energy_bricks_Heisenberg_HH_intr}
		|\mathcal{A} \mathcal{A}| \gg \min\{ |\mathcal{A}|^{7/4}, p |\mathcal{A}| \} \,.
		\end{equation}
		\label{t:H_sum-prod_intr}
	\end{theorem}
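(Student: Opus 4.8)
The plan is to pass from $\mathcal{A}\mathcal{A}$ to the multiplicative energy of the brick $\mathcal{A}$ inside $\HH$ and then bound that energy by point--plane incidences. Write $n := |A|$, so $|\mathcal{A}| = n^2$. A direct computation of the group law shows that for generators $g_i \leftrightarrow (a_i,b_i)$ one has $g_1 g_2 \leftrightarrow (a_1+a_2,\, b_1+b_2,\, a_1 b_2)$ (the $(1,2)$, $(2,3)$ and $(1,3)$ entries of the product). Since there are $|\mathcal{A}|^2$ products taking $|\mathcal{A}\mathcal{A}|$ distinct values, Cauchy--Schwarz gives $|\mathcal{A}\mathcal{A}| \ge |\mathcal{A}|^4/\E$, where $\E = \#\{(g_1,\dots,g_4)\in\mathcal{A}^4 : g_1 g_2 = g_3 g_4\}$ is the group multiplicative energy. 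It therefore suffices to prove $\E \ll n^{9/2} + n^6/p$ (and $\E \ll n^{9/2}$ with a gain over $\R$).

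Next I would make $\E$ explicit. The relation $g_1 g_2 = g_3 g_4$ is the system $a_1+a_2 = a_3+a_4$, $b_1+b_2 = b_3+b_4$, $a_1 b_2 = a_3 b_4$. For $a_1,a_3,b_2,b_4$ fixed with $a_1 b_2 = a_3 b_4$, the admissible pairs $(a_2,a_4)$ number $r_{A-A}(a_1-a_3)$ and the pairs $(b_1,b_3)$ number $r_{A-A}(b_2-b_4)$, where $r_{A-A}(d)=\#\{(u,v)\in A^2 : u-v=d\}$. Writing $x_1=a_1,\ x_2=a_3,\ y_1=b_2,\ y_2=b_4$ gives the clean identity
$$\E = \sum_{x_1 y_1 = x_2 y_2} r_{A-A}(x_1-x_2)\, r_{A-A}(y_1-y_2),$$
which is the energy of a brick, i.e.\ the specialization of Theorem~\ref{t:energy_bricks_Heisenberg}; everything reduces to bounding this weighted multiplicative energy of $A$.

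Now I set up the incidences. Expanding the weights introduces $(s,s'),(t,t')\in A^2$ with $s-s'=x_1-x_2$ and $t-t'=y_1-y_2$; eliminating $x_1=x_2+(s-s')$ and $y_2=y_1-(t-t')$ and using the elementary linearization $x_1 y_1 = x_2 y_2 \iff (t-t')x_2 + (s-s')y_1 = 0$ yields
$$\E = \#\{(x_2,s,s',y_1,t,t')\in A^6 : (t-t')x_2 + (s-s')y_1 = 0,\ x_2+s-s'\in A,\ y_1-t+t'\in A\}.$$
I read this as incidences between points $P=(x_2,s,s')$ and planes with normal $(t-t',\,y_1,\,-y_1)$; crucially the two membership conditions constrain the point coordinates and the plane coordinates \emph{separately}, so points and planes are independent, and each set has size at most $n^3$. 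As $P$ ranges in $A^3$, every line carries at most $k\le n$ of the points. Applying Rudnev's point--plane incidence theorem (valid while $n^3 = O(p^2)$) gives
$$\E \ll \frac{n^3\cdot n^3}{p} + (n^3)^{1/2} n^3 + k\,n^3 \ll n^{9/2} + \frac{n^6}{p},$$
since $kn^3\le n^4\le n^{9/2}$. Hence $|\mathcal{A}\mathcal{A}| \gg n^8/(n^{9/2}+n^6/p) \asymp \min\{n^{7/2}, pn^2\} = \min\{|\mathcal{A}|^{7/4}, p|\mathcal{A}|\}$. The range $n^3\ll p^2$ where Rudnev applies is exactly the range in which $n^{7/2}\le pn^2$; in the complementary range the bound $p|\mathcal{A}|$ is governed by the main term $|\mathcal{P}||\Pi|/p$ (equivalently a direct covering estimate), so the cases glue to the stated minimum. (The $O(n^4)$ contribution of tuples with a vanishing coordinate is absorbed throughout.)

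For $A\subseteq\R$ the same identity for $\E$ holds. Writing the multiplicative relation as $\tfrac{s-s'}{x_2} = -\tfrac{t-t'}{y_1}=:\mu$ and applying Cauchy--Schwarz in $\mu$ reduces the estimate to $\sum_\mu n(\mu)^2$, with $n(\mu)=\#\{(x_2,s,s')\in A^3 : s-s'=\mu x_2,\ x_2+s-s'\in A\}$; this is a collinearity count that a Szemerédi--Trotter argument controls, reproducing the baseline $\E \ll n^{9/2}$ and hence $|\mathcal{A}\mathcal{A}| \gg |\mathcal{A}|^{7/4}$. The extra saving $c$ cannot come from incidences alone: it needs a genuine sum--product input, namely that the ratio multiset $\{(s-s')/x_2\}$ cannot be simultaneously additively and multiplicatively concentrated. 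Feeding a quantitative real sum--product (third--energy) estimate into the bound for $\sum_\mu n(\mu)^2$ upgrades $n^{9/2}$ to $n^{9/2-\delta}$ and gives $|\mathcal{A}\mathcal{A}| \gg |\mathcal{A}|^{7/4+c}$. I expect this extraction of the gain $c$ --- balancing the additive weights $r_{A-A}$ against the multiplicative constraint --- to be the main obstacle, the incidence reduction above being the routine part.
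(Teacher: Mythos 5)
Your first two steps are correct and coincide with the paper's own starting point: the Cauchy--Schwarz reduction $|\mathcal{A}\mathcal{A}|\ge|\mathcal{A}|^4/\E$ and the identity $\E=\sum_{x_1y_1=x_2y_2}r_{A-A}(x_1-x_2)\,r_{A-A}(y_1-y_2)$ are exactly the energy of the brick with $Z=\{0\}$, i.e.\ system \eqref{f:system_H} of Theorem \ref{t:energy_bricks_Heisenberg}. The genuine gap is your incidence step over $\F_p$. After your linearization every plane has the \emph{homogeneous} equation $(t-t')u+y_1v-y_1w=0$; all such planes contain the line $\{(0,v,v):v\in\F_p\}$, so as a \emph{set} they form a pencil with at most $p+1$ distinct members, and what you actually feed into the incidence theorem is a multiset indexed by $(y_1,t,t')\in A^3$ with multiplicities as large as $|A|^2$ (every triple $(y_1,t,t)$ gives the same plane $v=w$). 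Theorem \ref{t:Misha+} is a statement about distinct planes and is false for multisets: a single plane repeated $N$ times, holding $N$ points no three of which are collinear, has $N^2$ incidences, while the bound would give $O(N^2/p+N^{3/2})$; moreover, once $|A|^2\gg p$ the hypothesis $|\mathcal{P}|\le|\Pi|$ is violated as well, since the pencil has at most $p+1$ members. The homogeneity of your equation is precisely what collapses the plane family, and this is why the paper eliminates variables differently: from the system it derives $x'/y_*=(x'_*-x_*)/(y-y')$ and hence $\E\le\sum_w r_{A/A}(w)\,r_{(A-A)/(A-A)}(w)$ (formula \eqref{tmp:25.06_1}), a count whose associated plane family is genuinely two--parameter (the normal directions form a pencil, but the affine offsets vary), so that Theorem \ref{t:Misha+} can legitimately be applied after the standard dyadic decomposition in the multiplicity, giving $|A|^6/p+|A|^{9/2}$. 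Your setup can be repaired in the same spirit: by the pencil structure $\E=\sum_\mu n(\mu)m(\mu)+O(|A|^4)$ with $\mu$ the common ratio, and Cauchy--Schwarz in $\mu$ bounds this, up to $O(|A|^4)$, by $\sum_\rho r_{A/A}(\rho)\,r_{(A-A)/(A-A)}(\rho)$, i.e.\ exactly the paper's quantity --- but estimating that sum, not a one--line application of Rudnev to $|A|^3$ ``planes'', is where the work lies.

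The real case is not closed either. You correctly pass (via Cauchy--Schwarz in $\mu$) to $\sum_\mu n(\mu)^2$, but then you only \emph{conjecture} that a sum--product input upgrades $|A|^{9/2}$ to $|A|^{9/2-\delta}$, calling this ``the main obstacle''. In fact $\sum_\mu n(\mu)^2$ is dominated, up to $O(|A|^4)$, by the same quantity $\sum_\rho r_{A/A}(\rho)\,r_{(A-A)/(A-A)}(\rho)$, and the bound $O(|A|^{9/2-c})$ for it over $\R$ is precisely the known result of \cite{RS_SL2} which the paper invokes in the remark after Corollary \ref{cor:energy_bricks_Heisenberg_HH}; with that input the exponent $7/4+c$ follows immediately. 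So the first half of the theorem is missing from your proposal exactly at the point where the paper brings in its external reference, and the second half rests on an invalid application of the point--plane theorem.
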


	It was conjectured in \cite{HH_sum-prod_HH} that, actually, the right exponent in \eqref{f:HH_sum-prod} is four and we have obtained $7/2+c$ in $\R$.

	Using the representation theory and the incidences theory in $\F_p$, we have found new bounds for products of large subsets from the Heisenberg group as well from the affine group, see Theorem \ref{t:z_Heisenberg} and Corollary \ref{c:z_Aff} below. 
	Also, we improve the dependence of $n$ on $\eps$ as well as the dependence on $|Z|$  in the following result from \cite[Theorem 1.3]{HH_Hn} (see Theorem \ref{t:bricks_Hn} from Section \ref{sec:proof}).

	\begin{theorem}
		Let $\eps>0$. 
		Then there exists $n_0 (\eps)$ such that for all $n\ge n_0 (\eps)$ and any sets  $X_i,Y_i, Z\subseteq \F_p$, $i\in [n]$,  
		$X = \prod_{i=1}^n X_i \subseteq \F^n_p$, $Y = \prod_{i=1}^n Y_i \subseteq \F^n_p$ if we form
		\[\mathcal{A} = \{ [x,y,z] ~:~ x\in X,\, y\in Y,\, z\in Z \} \subseteq \HH_n\] 
		with 
		\begin{equation}\label{f:HH_bricks_Hn}
		|\mathcal{A}| > |\HH_n|^{3/4+\eps} \,,
		\end{equation}
		then $\mathcal{A}^2$ contains at least $|\mathcal{A}|/p$ cosets of $[0,0,\F_p]$. 
		\label{t:HH_bricks_Hn}
	\end{theorem}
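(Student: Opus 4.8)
\noindent The plan is to factor the problem through the centre. Write $Z(\HH_n)=[0,0,\F_p]$ and recall that $\HH_n/Z(\HH_n)\cong\F_p^{2n}$, the projection being $[x,y,z]\mapsto(x,y)$; thus the cosets of $Z(\HH_n)$ are indexed by $(a,b)\in\F_p^{2n}$, and the assertion ``$\mathcal{A}^2$ contains the coset $[a,b,\F_p]$'' means that the fibre $\{t\in\F_p:[a,b,t]\in\mathcal{A}^2\}$ equals all of $\F_p$. Using the multiplication $[x,y,z][x',y',z']=[x+x',y+y',z+z'+\langle x,y'\rangle]$, where $\langle u,v\rangle=\sum_{i=1}^n u_iv_i$, a direct computation shows that for $(a,b)\in(X+X)\times(Y+Y)$ one has
\[
\{t\in\F_p:[a,b,t]\in\mathcal{A}^2\}=Z+Z+D_{a,b}\,,\qquad D_{a,b}:=\{\langle x,y'\rangle:x\in X\cap(a-X),\,y'\in Y\cap(b-Y)\}\,.
\]
Because $X,Y$ are bricks, $X\cap(a-X)=\prod_i X_{a,i}$ and $Y\cap(b-Y)=\prod_i Y_{b,i}$ with $X_{a,i}=X_i\cap(a_i-X_i)$ and $Y_{b,i}=Y_i\cap(b_i-Y_i)$, so $D_{a,b}$ splits as the long Minkowski sum $D_{a,b}=\sum_{i=1}^n X_{a,i}Y_{b,i}$ of the coordinatewise product sets.

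Next I would turn the covering requirement into a combinatorial count. Applying the Cauchy--Davenport inequality to the $n+2$ summands of $D_{a,b}+Z+Z$ gives
\[
|D_{a,b}+Z+Z|\ge\min\Big\{p,\ (2|Z|-1)+\sum_{i=1}^n\big(|X_{a,i}Y_{b,i}|-1\big)\Big\}\,,
\]
so the whole coset $[a,b,\F_p]$ lies in $\mathcal{A}^2$ as soon as
\[
\#\{i:|X_{a,i}Y_{b,i}|\ge2\}\ge p-2|Z|+1\,.
\]
Since $|X_{a,i}Y_{b,i}|\ge\max\{|X_{a,i}|,|Y_{b,i}|\}$ whenever neither factor reduces to $\{0\}$, a coordinate enters this count (a \emph{growth coordinate}) once, say, $|X_{a,i}|\ge2$ and $Y_{b,i}$ contains a nonzero element. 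The task is thereby reduced to exhibiting at least $|\mathcal{A}|/p=|X||Y||Z|/p$ pairs $(a,b)$ that possess at least $p-2|Z|+1$ growth coordinates.

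The decisive step is this count, and it is here that the brick structure does the real work. Sampling $(a,b)$ uniformly from $(X+X)\times(Y+Y)=\prod_i(X_i+X_i)\times\prod_i(Y_i+Y_i)$ makes the pairs $(a_i,b_i)$ independent across $i$, so the number of growth coordinates is a sum of independent indicators. An elementary argument (Cauchy--Davenport at a single coordinate: $r(a)=1$ forces $a$ to be a diagonal sum $2x$, so $\#\{a:|X_i\cap(a-X_i)|\ge2\}\ge\tfrac14|X_i+X_i|$) shows that, conditioned on $a_i\in X_i+X_i$, one has $|X_{a,i}|\ge2$ with probability bounded below by an absolute constant whenever $|X_i|\ge2$, and symmetrically for $Y$; meanwhile the hypothesis $|\mathcal{A}|>|\HH_n|^{3/4+\eps}$ forces all but fewer than $n/4$ of the coordinates to satisfy $|X_i|\ge2$ or $|Y_i|\ge2$. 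Hence $\gg n$ coordinates are each a growth coordinate with probability $\gg1$, and a Chernoff bound shows that for $n\ge n_0(\eps)$ the number of growth coordinates exceeds $p-2|Z|+1$ for a $(1-o(1))$--proportion of the sampled $(a,b)$. Consequently the number of fully covered cosets is at least $(1-o(1))|X+X||Y+Y|\ge(1-o(1))|X||Y|\ge|X||Y||Z|/p$ (the case $|Z|=p$ being immediate, as then $Z+Z=\F_p$), which is the claim. I expect the main obstacle to lie precisely in this last step: extracting the right supply of independent growth coordinates from the density hypothesis and making the concentration estimate sharp enough to yield the improved dependence of $n_0$ on $\eps$ and on $|Z|$. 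The gains over \cite{HH_Hn} should come from two places, namely the Cauchy--Davenport slack $|Z+Z|\ge2|Z|-1$, which lowers the covering threshold from $p$ to $p-2|Z|+1$ (this is the improved $|Z|$--dependence), and the replacement of an iterative covering argument by the single concentration inequality above (this is the improved dependence on $\eps$).
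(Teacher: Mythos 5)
You have the right reduction, but the covering step fails, and it fails structurally rather than technically. The fibre identity $\{t\in\F_p : [a,b,t]\in\mathcal{A}^2\} = Z+Z+D_{a,b}$ with $D_{a,b}=\sum_{i=1}^n X_{a,i}Y_{b,i}$ is correct, and passing to the coordinatewise intersections $X_i\cap(a_i-X_i)$, $Y_i\cap(b_i-Y_i)$ and then counting the good pairs $(\vec a,\vec b)$ is exactly the reduction used in the paper's proof of Theorem \ref{t:bricks_Hn}. The problem is what you do with the fibre. Counting each growth coordinate only through the trivial bound $|X_{a,i}Y_{b,i}|\ge 2$, Cauchy--Davenport can never give more than
\begin{equation*}
|Z+Z+D_{a,b}| \ \ge\ \min\bigl\{p,\ (2|Z|-1)+n\bigr\}\,,
\end{equation*}
because there are only $n$ coordinates available; so your criterion (at least $p-2|Z|+1$ growth coordinates) can be met only when $n\ge p-2|Z|+1$. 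But in Theorem \ref{t:HH_bricks_Hn} the threshold $n_0(\eps)$ depends on $\eps$ alone, uniformly in the prime $p$, and hypothesis \eqref{f:HH_bricks_Hn} does not force $|Z|$ to be large: for any $\eps<1/4$ and $n$ large one may take $|Z|=1$ and $|X_i|=|Y_i|=\lceil p^{(3/4+\eps)(2n+1)/(2n)}\rceil\le p$, which satisfies \eqref{f:HH_bricks_Hn} while $p-2|Z|+1=p-1$ vastly exceeds $n$ once $p$ is large. No Chernoff bound can rescue this: the number of growth coordinates is deterministically at most $n$, and the deficit is of order $p$. For the same reason the Cauchy--Davenport slack $2|Z|-1$ cannot be the source of any improved dependence on $|Z|$.

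What is missing is the actual content of the theorem: a lower bound of order $p$ for $\bigl|Z+Z+\sum_i X_{a,i}Y_{b,i}\bigr|$, that is, a genuine sum--product statement for the form $z+z'+x_1y'_1+\dots+x_ny'_n$. Purely additive tools cannot produce it, since each summand $X_{a,i}Y_{b,i}$ is a product set about which Cauchy--Davenport knows nothing beyond the trivial cardinality bound, and there are boundedly many summands. This is precisely where \cite{HH_Hn} and the paper bring in multiplicative (incidence) technology: in the proof of Theorem \ref{t:bricks_Hn} one counts the solutions $\sigma_k$ (with $k=n/2$) of $z+z'+x_1y'_1+\dots+x_ny'_n=\la$ and proves, by induction on $k$ with both the base case and the inductive step resting on Rudnev's point--plane incidence bound (Theorem \ref{t:Misha+}), the asymptotic formula
\begin{equation*}
\sigma_k - \frac{|Z|^2|X||Y|}{p} \ \lesssim\ |Z|^{2-2^{-k}}\bigl(\mathcal{X}\mathcal{Y}\bigr)^{2k-1+2^{-k}}\,, \qquad \mathcal{X}=\max_i |X_i|\,,\ \ \mathcal{Y}=\max_i |Y_i|\,,
\end{equation*}
whose error term decays doubly exponentially in $n$. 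It is this incidence input that converts per--coordinate sets of size about $p^{3/4+\eps}$ into coverage of all of $\F_p$, and it is also the source of the improved dependence of $\eps$ on $n$. Your fibre decomposition is a sound starting point, but the growth--coordinate count must be replaced by an estimate of this kind.
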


	In \cite{HH_isom} it was found an interesting application  of products of sets in the Heisenberg group to so--called models of Freiman isomorphisms.
	It was showed that there is a (nonabelian) group, namely, the Heisenberg group such that any set with the doubling constant less than two does not has any good model, see \cite[Section 5.3]{TV}.
	Recall the required definitions and formulate our result.

	Let $\Gr_1$, $\Gr_2$ be groups, $A \subseteq \Gr_1$, $B \subseteq \Gr_2$
	and $s\ge 2$ be a positive integer. 
	A map $\rho : A \to B$  is said to be a {\it Freiman $s$--homomorphism} if for all $2s$--tuples   $(a_1,\dots, a_s,b_1,\dots, b_s) \in A^s \times B^s$ and any signs $\eps_j \in \{-1, 1 \}$, we have 
	\[
	a^{\eps_1}_1 \dots a^{\eps_s}_s = b^{\eps_1}_1 \dots b^{\eps_s}_s 
	\implies
	\rho(a_1)^{\eps_1} \dots \rho(a_s)^{\eps_s} = \rho(b_1)^{\eps_1} \dots \rho(b_s)^{\eps_s}  \,. 
	\]
	If moreover $\rho$ is bijective and $\rho^{-1}$ is also a Freiman $s$--homomorphism, then $\rho$ is called a {\it Freiman $s$--isomorphism}. 
	In this case $A$ and $B$ are said to be Freiman $s$--isomorphic.

	\begin{theorem}
		Let $n$ be a positive integer and $\eps \in (0,1/6)$ be any real number. 
		Then there is a finite (nonabelian) group $H$ and a set $A_* \subset H$ with the following properties: \\
		$1)~$ $|A_*| > n$, $|A_* A_*| < 2|A_*|$;\\
		$2)~$ For any $A\subseteq A_*$, $|A| \ge |A_*|^{1-\eps}$ and any finite group $\Gr$ such that   there exists a Freiman $5$--isomorphism from $A$ to $\Gr$, we have $|\Gr| \gg |A|^{1+\frac{1-6\eps}{5}}$. 
		\label{t:HH_isom_intr}
	\end{theorem}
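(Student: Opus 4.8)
The plan is to realise $H$ as the Heisenberg group $\HH$ over a large prime field $\F_p$ and to take for $A_*$ a \emph{centre--saturated} progression. Concretely, fix parameters $k,p$ (to be chosen from $n$ and $\eps$) and put
\[
A_* \;=\; \{\, [x,1,z] \ :\ x\in\{0,1,\dots,k-1\},\ z\in\F_p \,\}\subseteq \HH\,,
\]
so $|A_*|=kp$. Since the third coordinate is free, the product $[x_1,1,z_1][x_2,1,z_2]=[x_1+x_2,2,z_1+z_2+x_1]$ has its centre coordinate ranging over all of $\F_p$, whence $A_*A_*=\{[x,2,z]:0\le x\le 2k-2,\ z\in\F_p\}$ and $|A_*A_*|=(2k-1)p<2|A_*|$. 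This gives property $1)$, while the commutator $[\,[x,1,z],[x',1,z']\,]=[0,0,x-x']$ is nontrivial for $x\ne x'$, so $A_*$ is genuinely nonabelian. Choosing $k,p$ with $kp>n$ (and $k$ a small power of $p$, to be optimised at the end against $\eps$) settles the first half.

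For $2)$ fix $A\subseteq A_*$ with $|A|\ge|A_*|^{1-\eps}$ and a Freiman $5$--isomorphism $\rho\colon A\to B:=\rho(A)\subseteq\Gr$. The key observation is that each of the following is a relation between words of length at most $5$, hence is preserved in both directions by $\rho$: the commuting relation $a_1a_2=a_2a_1$ (equivalently $x$--equality); the commutator coincidences $a_1a_2a_1^{-1}a_2^{-1}=a_3a_4a_3^{-1}a_4^{-1}$ (equivalently $x_1-x_2=x_3-x_4$); and the \emph{action} relations $a_1a_2a_1^{-1}a_2^{-1}\,a_5=a_6$ (equivalently $x_6=x_5$ and $z_6=z_5+(x_1-x_2)$). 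Transporting these to $\Gr$, the commutators $\rho(a_1)\rho(a_2)\rho(a_1)^{-1}\rho(a_2)^{-1}$ depend only on $x_1-x_2$, commute with the relevant fibres of $B$, and translate them exactly as in $\HH$; thus $\Gr$ carries a faithful copy of the commutator--action of $A$ on itself.

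From here the plan is to count the distinct elements of $\Gr$ produced by the preserved structure and to bound the associated multiplicative energy from above by the Heisenberg growth estimates of the paper. Writing $T$ for the set of $x$--coordinates occurring in $A$ and $Z_t$ for the corresponding central fibres, the length--$5$ words $a_1a_2a_1^{-1}a_2^{-1}a_5$ take, in $\HH$, exactly $\sum_{t\in T}|Z_t+(T-T)|$ distinct values, and by the isomorphism the \emph{same} number of distinct elements appears in $\Gr$. One then plays the additive gain from $T-T$ (via Cauchy--Davenport and Pl\"unnecke--type inequalities) against the fibre sizes, and feeds the resulting noncommutative growth into Theorem~\ref{t:energy_bricks_Heisenberg} (respectively Theorem~\ref{t:H_sum-prod_intr}) to convert it into $|\Gr|\gg|A|^{1+(1-6\eps)/5}$; the exponent $1/5$ reflects the Freiman length $s=5$, while the loss $6\eps$ is exactly what the passage from $A_*$ to a density--$(1-\eps)$ subset costs.

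The main obstacle is this last quantitative step for an \emph{arbitrary} dense subset $A$: when every fibre $Z_t$ is almost all of $\F_p$ the centre is already saturated and the naive word count only returns $|\Gr|\gg|A|$, so the gain must instead be extracted from the interaction of the commutator translations with the short product sets $A,AA,\dots,A^{5}$, which lie on the distinct levels $y=1,2,\dots,5$. Making this interaction yield the full power $|A|^{1+(1-6\eps)/5}$ uniformly over $A$ --- that is, ruling out a more economical model that would reuse the central direction --- is precisely where the incidence and energy bounds for products in $\HH$ are indispensable, and is the heart of the argument.
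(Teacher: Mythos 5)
You have a genuine gap, and it starts with the construction itself. You freeze the $y$--coordinate at $1$, taking $A_* = \{[x,1,z] : x\in\{0,\dots,k-1\},\, z\in\F_p\}$, whereas the paper takes $A_* = \{[x,y,z] : x\in\{0,1,\dots,\lceil p^\a\rceil\},\, y,z\in\F_p\}$ with $y$ \emph{free} and $\a = \frac{1+4\eps}{2-2\eps}$. This is not cosmetic: the paper's proof of property $2)$ hinges on showing that for \emph{every} dense subset $A\subseteq A_*$ the commutator set $[A,A]$ contains the whole centre $[0,0,\F_p]$ (first part of Theorem \ref{t:z_Heisenberg}, proved via Vinh's incidence bound, which needs $|A|>p^{5/2}$ and uses that the commutators $[a;a']=[0,0,xy'-yx']$ have $y,y'$ ranging over $\F_p$). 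Having every central element available as a $4$--letter word in elements of $A$ is exactly what allows one to define $g_w\in\Gr$ for all $w\in\F_p$, to run the induction $g_{l(i-j)}=g_{i-j}^l$ inside relations of length $5$ (e.g.\ $[a;b]\,h_j^{-1}=[a';b']\,h_i^{-1}$, two $5$--words with the same sign pattern), and so to produce in $\Gr$ a \emph{commutator of order $p$}. In your set $[A_*,A_*]=\{[0,0,d] : |d|<k\}$, a negligible part of the centre, so this mechanism is dead. Concretely, take the dense subset $A\subseteq A_*$ whose central fibres are an interval $I$ of length $p^{1-\eps'}$: then every relation among words of length at most $5$ in $A$ already holds over $\Z$ (nothing wraps modulo $p$), so $A$ is Freiman $5$--isomorphic to its copy in the Heisenberg group over $\Z/q\Z$ with $q=O(k+|I|)$, and no argument can force $|\Gr|\ge p^3$ --- yet $p^3$ is precisely the quantity that the paper's choice of $\a$ converts into the exponent $1+\frac{1-6\eps}{5}$ via $p^3=|A_*|^{3/(2+\a)}\ge |A|^{1+\frac{1-6\eps}{5}}$.

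The proof mechanism you propose also cannot close the gap, for a reason you half--acknowledge yourself. You plan to count transferable $5$--word values and then amplify by the energy/growth bounds for bricks in $\HH$ (Theorem \ref{t:energy_bricks_Heisenberg}). But $A_*$ is built so that $|A_*A_*|<2|A_*|$ --- it does not grow in $\HH$ --- so every count of word values, and every product--growth estimate inside $\HH$, tops out at $O(|A|)$; this is exactly your own admission that the ``naive word count only returns $|\Gr|\gg|A|$''. The paper's lower bound on $|\Gr|$ is not a counting bound at all; it is algebraic: Lemma \ref{l:3commutator} shows $\Gr=\langle\rho(A)\rangle$ is two--step nilpotent, hence the direct product of its Sylow subgroups; the Sylow $p$--subgroup cannot be abelian, since then the transferred commutator relations would force $xy'-yx'=0$ on $A$, i.e.\ $|A|\le p^2$, a contradiction; and a nonabelian $p$--group has order at least $p^3$. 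Incidentally, your closing heuristic that ``the exponent $1/5$ reflects the Freiman length $s=5$'' is wrong: the $5$ in the exponent comes from the incidence threshold $|A|^2>p^5$ together with the optimization of $\a$, while the Freiman length $5$ is only what is needed to transfer the relations above. Since you explicitly defer ``the heart of the argument'' to estimates that are structurally incapable of supplying it, the proposal is not a proof, and the construction itself would have to be changed (to one with $y$ free) before a repair along the paper's lines could begin.
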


	It is well--known \cite[Proposition 1.2]{GR} that in abelian case the situation above is not possible and Theorem \ref{t:HH_isom_intr} shows that the picture changes drastically already in the simplest nonabelian case of a two--step nilpotent group. 
	Previously, in \cite{HH_isom} the authors proved 
	an analogue of Theorem \ref{t:HH_isom_intr}   
	for $6$--isomorphisms (our arguments follow their scheme but are slightly simpler). 
	It is easy to see from our proof that, although,  the constant $5$ possibly can be improved but it is the limit of the method.


	All logarithms are to base $2.$ The signs $\ll$ and $\gg$ are the usual Vinogradov symbols.
	For a positive integer $n,$ we set $[n]=\{1,\ldots,n\}.$
	Having a set $A$,  we will write $a \lesssim b$ or $b \gtrsim a$ if $a = O(b \cdot \log^c |A|)$, $c>0$.

	The author is grateful to Misha Rudnev for
	useful discussions. 

\section{Notation}
\label{sec:definitions}

In this paper $\Gr$ is a group with the identity element $e$, $\F$ is a field,  $\F^* = \F \setminus \{0\}$, and $p$ is an odd prime number, 
$\F_p = \Z/p\Z$. 
%
%
Also, we use the same letter to denote a set $A\subseteq \F$ and  its characteristic function $A: \F \to \{0,1 \}$.  

Put
$\E^{+}(A,B)$ for the {\it common additive energy} of two sets $A,B \subseteq \F$
(see, e.g., \cite{TV}), that is, 
$$
\E^{+} (A,B) = |\{ (a_1,a_2,b_1,b_2) \in A\times A \times B \times B ~:~ a_1+b_1 = a_2+b_2 \}| \,.
$$
If $A=B$, then  we simply write $\E^{+} (A)$ instead of $\E^{+} (A,A)$
and the quantity $\E^{+} (A)$ is called the {\it additive energy} in this case. 
One can consider $\E^{+}(f)$ for any complex function $f$ as well.  
More generally, 
we deal with 
a higher energy
\begin{equation}\label{def:T_k_ab}
\T^{+}_k (A) := |\{ (a_1,\dots,a_k,a'_1,\dots,a'_k) \in A^{2k} ~:~ a_1 + \dots + a_k = a'_1 + \dots + a'_k \}|
\,.
\end{equation}
Sometimes we  use representation function notations like $r_{AB} (x)$ or $r_{A+B} (x)$, which counts the number of ways $x \in \F$ can be expressed as a product $ab$ or a sum $a+b$ with $a\in A$, $b\in B$, respectively. 
Further clearly
\begin{equation*}\label{f:energy_convolution}
\E^{+} (A,B) = \sum_x r_{A+B}^2 (x) = \sum_x r^2_{A-B} (x) = \sum_x r_{A-A} (x) r_{B-B} (x) \,.
\end{equation*}
Similarly, one can define $\E^\times (A,B)$, $\E^{\times} (A)$, $\E^{\times} (f)$  and so on.  
In nonabelian setting the energy of a set $A, B \subseteq \Gr$ is (see \cite{sh_hyp}) 
\[
\E^{} (A,B) = |\{ (a_1,a_2,b_1,b_2) \in A\times A \times B \times B ~:~ a_1 b^{-1}_1 = a_2 b^{-1}_2 \}| \,.
\]
Clearly, $\E^{} (A,B) \le |A| |B| \min\{|A|, |B|\}$ and $\E^{} (A,B) \ge |A||B|$.
We write $[A,B]$ for the set of all commutators of $A$ and $B$, namely, $[A,B] = \{ aba^{-1}b^{-1} ~:~ a\in A,\, b\in B\}$.

\bigskip

We finish this section recalling some notions and simple facts from the representations theory, see, e.g., \cite{Serr_representations}.
For a finite group $\Gr$ let $\FF{\Gr}$ be the set of all irreducible unitary representations of $\Gr$. 
It is well--known that size of $\FF{\Gr}$ coincides with  the number of all conjugate classes of $\Gr$.  
For $\pi \in \FF{\Gr}$ denote by $d_\pi$ the dimension of this representation and we write $\langle \cdot, \cdot \rangle_{HS}$ for the correspondent Hilbert--Schmidt scalar product 
$\langle A, B \rangle_{HS}:= \tr (AB^*)$, where $A,B$ are any $(d_\pi \times d_\pi)$--matrices. 
Clearly, $\langle \pi(g) A, \pi(g) B \rangle_{HS} = \langle A, B \rangle_{HS}$.
Also, we have $\sum_{\pi \in \FF{\Gr}} d^2_\pi = |\Gr|$.

For any $f:\Gr \to \C$ and $\pi \in \Gr$ define the matrix $\FF{f} (\pi)$ which is called the Fourier transform of $f$ at $\pi$ by the formula 
\begin{equation}\label{f:Fourier_representations}
\FF{f} (\pi) = \sum_{g\in \Gr} f(g) \pi (g) \,.
\end{equation}
Then the inverse formula takes place
\begin{equation}\label{f:inverse_representations}
f(g) = \frac{1}{|\Gr|} \sum_{\pi \in \FF{\Gr}} d_\pi \langle \FF{f} (\pi), \pi (g^{-1}) \rangle_{HS} \,,
\end{equation}
and the Parseval identity is 
\begin{equation}\label{f:Parseval_representations}
\sum_{g\in \Gr} |f(g)|^2 = \frac{1}{|\Gr|} \sum_{\pi \in \FF{\Gr}} d_\pi \| \FF{f} (\pi) \|^2_{HS} \,.
\end{equation}
The main property of the Fourier transform is the convolution formula 
\begin{equation}\label{f:convolution_representations}
\FF{f*g} (\pi) = \FF{f} (\pi) \FF{g} (\pi) \,,
\end{equation}
where the convolution of two functions $f,g : \Gr \to \C$ is defined as 
\[
(f*g) (x) = \sum_{y\in \Gr} f(y) g(y^{-1}x) \,.
\]
Finally, it is easy to check that for any matrices $A,B$ one has $\| AB\|_{HS} \le \| A\|_{op} \| B\|_{HS}$ and $\| A\|_{op} \le \| A \|_{HS}$, where  the operator $l^2$--norm  $\| A\|_{op}$ is just the absolute value of the maximal eigenvalue of $A$.

\section{Preliminaries}
\label{sec:preliminaries}

Let $\F$ be a field. 
Let $\mathcal{P} \subseteq \F \times \F$ be a set of points  and $\L$ be a collection of lines in $\F \times \F$. 
Having $p\in \mathcal{P}$ and $l \in \L$, we write 
\begin{displaymath}
\I (r,l) = \left\{ \begin{array}{ll}
1 & \textrm{if } p\in l\\
0 & \textrm{otherwise.}
\end{array} \right.
\end{displaymath}
Put $\mathcal{I} (\mathcal{P}, \L) = \sum_{r\in \P, l \in \L} \I (p,l)$.
We will omit to write the conditions $r\in \P$ and $l \in \L$ below.

A trivial upper bound for $\I(\P,\L)$ is 
\begin{equation}\label{f:inc_trivial}
\I (\P,\L) \le \min \{ |\P|^{1/2} |\L|+ |\P|, |\L|^{1/2} |\P| + |\L| \} \,,
\end{equation} 
see, e.g., \cite[Section 8]{TV}.
Further, there is a bound of Vinh \cite{Vinh} (also, see \cite[Section 3]{sh_as}) which says that
\begin{equation}\label{f:Vinh} 
\left| \sum_{r\in l} \I(r,l) f(r) g(l) \right| \le \sqrt{p} \| f\|_2 \|g \|_2 \,,  
\end{equation}  
where either $\sum_r f(r) =0$ or $\sum_{l} g(l) = 0$. 
Finally, a well--known result of Stevens--de Zeeuw
gives us 
an 
asymptotic formula for the number of points/lines incidences in the case when  the set of points forms a Cartesian product,
see \cite{SdZ}, and also \cite{sh_as}.  

\begin{theorem}
	Let $A,B \subseteq \F_p$ be sets, 
	$\mathcal{P} = A\times B$, and $ \mathcal{L}$ be a collection of lines in $\F^2_p$.
	Then 
	\begin{equation}\label{f:line/point_as}
	\mathcal{I}(\mathcal{P}, \mathcal{L}) - \frac{|A| |B| |\mathcal{L}|}{p} \ll |A|^{3/4} |B|^{1/2} |\mathcal{L}|^{3/4} + |\mathcal{L}| + |A| |B| \,.
	\end{equation}
	\label{t:line/point_as}	
\end{theorem}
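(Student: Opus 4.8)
The plan is to remove the expected main term $|A||B||\mathcal{L}|/p$ using the spectral bound \eqref{f:Vinh}, and then to gain the extra saving over the generic estimate by exploiting that the point set is the grid $A\times B$. Writing $i(\ell)$ for the number of points of $\mathcal{P}=A\times B$ on a line $\ell$, the direct application of \eqref{f:Vinh} with $f=\mathbb{1}_{A\times B}-|A||B|/p^2$ and $g=\mathbb{1}_{\mathcal{L}}$ gives only $\mathcal{I}(\mathcal{P},\mathcal{L})-|A||B||\mathcal{L}|/p\ll\sqrt{p\,|A||B||\mathcal{L}|}$, which ignores the product structure; the whole point is to do better for lines that are rich. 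So first I would decompose $\mathcal{L}$ dyadically into classes $\mathcal{L}_k=\{\ell:\ k\le i(\ell)<2k\}$ and reduce everything to bounding $|\mathcal{L}_{\ge k}|$, the number of lines carrying at least $k$ points of the grid; the target estimate is $|\mathcal{L}_{\ge k}|\ll |A|^3|B|^2/k^4$ in the relevant range of $k$.

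To count rich lines I would use a second moment over the columns of the grid. For $a\in A$ let $\mathcal{L}(a)\subseteq\mathcal{L}_{\ge k}$ be the non-vertical lines meeting the column $\{a\}\times B$; since such a line meets a column in at most one point, $\sum_{a\in A}|\mathcal{L}(a)|=\sum_{\ell\in\mathcal{L}_{\ge k}}i(\ell)\ge k|\mathcal{L}_{\ge k}|$, and Cauchy--Schwarz yields $\sum_{a\in A}|\mathcal{L}(a)|^2\ge k^2|\mathcal{L}_{\ge k}|^2/|A|$. On the other hand, expanding the square,
\[
\sum_{a\in A}|\mathcal{L}(a)|^2=\#\{(a,b,b',\ell,\ell'):\ a\in A,\ b,b'\in B,\ (a,b)\in\ell,\ (a,b')\in\ell',\ \ell,\ell'\in\mathcal{L}_{\ge k}\}\,,
\]
where the diagonal $\ell=\ell'$ contributes exactly $\mathcal{I}(\mathcal{P},\mathcal{L}_{\ge k})\le\mathcal{I}(\mathcal{P},\mathcal{L})$, so everything comes down to the off-diagonal term.

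The heart of the argument is that off-diagonal count. Subtracting the two incidence relations $b=m_\ell a+b_\ell$ and $b'=m_{\ell'}a+b_{\ell'}$ I would pass to the variable $w=b-b'\in B-B$, so that $(a,w)$ lies on the difference line $\ell\ominus\ell'$; this turns the collision count into a point--line incidence problem between the grid $A\times(B-B)$ and the multiset of difference lines, and I would apply the spectral bound \eqref{f:Vinh} to it. This removes a main term of size $\approx|A||B-B||\mathcal{L}_{\ge k}|^2/p$ and leaves a square-root error governed by $\|g\|_2^2$, the number of coincidences among the difference lines (an additive-energy quantity of the slope/intercept data). Combined with the lower bound $k^2|\mathcal{L}_{\ge k}|^2/|A|$, this is designed to produce $|\mathcal{L}_{\ge k}|\ll|A|^3|B|^2/k^4$ together with the $|A||B||\mathcal{L}|/p$-type contribution. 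I expect this to be the main obstacle: one must verify that the reduction genuinely beats the generic bound $\sqrt{p|\mathcal{P}||\mathcal{L}|}$ and surfaces the Cartesian saving, and the honest bookkeeping of the difference-line multiplicities here is exactly the place where a genuine three-dimensional input is needed -- in the original argument of Stevens--de Zeeuw this step is handled by Rudnev's point--plane incidence bound, which is the natural substitute for the planar \eqref{f:Vinh} at this point.

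Finally I would sum over the dyadic scales. Splitting $\mathcal{L}$ at the threshold $k_0=(|A|^3|B|^2/|\mathcal{L}|)^{1/4}$, the poor lines contribute at most $k_0|\mathcal{L}|=|A|^{3/4}|B|^{1/2}|\mathcal{L}|^{3/4}$, while the rich ones contribute $\sum_{k\ge k_0}k\cdot|A|^3|B|^2/k^4\ll|A|^3|B|^2/k_0^3$, which is the same quantity; the additive terms $|\mathcal{L}|$ and $|A||B|=|\mathcal{P}|$ are produced by the trivial bound \eqref{f:inc_trivial} in the degenerate regimes of very few points per line or very few lines, and the term $|A||B||\mathcal{L}|/p$ is precisely what was subtracted at the start. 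Assembling these pieces gives \eqref{f:line/point_as}.
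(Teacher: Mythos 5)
First, a point of comparison: the paper itself does not prove Theorem \ref{t:line/point_as}; it quotes it from Stevens--de Zeeuw \cite{SdZ} and \cite{sh_as}, recording only that the proof ``rests on'' the point--plane bound of Theorem \ref{t:Misha+}. Your outer skeleton is consistent with that known proof: the reduction to the rich-line count $|\mathcal{L}_{\ge k}|\ll |A|^3|B|^2/k^4$, the Cauchy--Schwarz over columns, and the final dyadic summation with threshold $k_0=(|A|^3|B|^2/|\mathcal{L}|)^{1/4}$ are all correct bookkeeping. The genuine gap is that the single step carrying all of the content --- the off-diagonal estimate --- is not proved, and the mechanism you propose for it cannot work. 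Passing to $w=b-b'$ and applying \eqref{f:Vinh} to the grid $A\times(B-B)$ against the multiset of difference lines fails on two counts. First, the main term it produces is $|A|\,|B-B|\,|\mathcal{L}_{\ge k}|^2/p$, and since $|B-B|$ can be of order $\min\{|B|^2,p\}$, this already exceeds the quantity $k^2|\mathcal{L}_{\ge k}|^2/|A|$ you must beat, for every admissible richness $k\le\min\{|A|,|B|\}$ (for $|B-B|\sim p$ one would need $k\gg|A|$). Second, and more fundamentally, \eqref{f:Vinh} is itself only the generic $\sqrt{p\,|\mathcal{P}|\,|\mathcal{L}|}$ estimate, so feeding a derived incidence problem back into it cannot create the Cartesian saving: its error term here is $\sqrt{p\,|A|\,|B-B|\,\E^{+}(\mathcal{L}_{\ge k})}$, where $\E^{+}(\mathcal{L}_{\ge k})$ is the additive energy of $\mathcal{L}_{\ge k}$ viewed as a subset of $\F_p^2$, and for an arbitrary line family one only has $\E^{+}(\mathcal{L}_{\ge k})\le|\mathcal{L}_{\ge k}|^3$, which returns a rich-line bound weaker than the target by a factor at least $p/|B|$. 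You acknowledge this and point at Rudnev's theorem, but you never set up the point--plane problem: in the actual argument the count
\[
\#\left\{(a,b,b',\ell,\ell')~:~ b=m_\ell a+c_\ell\in B,\ b'=m_{\ell'}a+c_{\ell'}\in B\right\}
\]
is rewritten so that one group of variables becomes a set of points in $\F_p^3$ and the other a family of planes, after which one must control the multiplicities of that family and the maximal number of collinear points entering \eqref{f:Misha+}. That reduction is exactly where the theorem lives, and it is absent from your proposal.

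A secondary but real defect concerns the main term. The statement bounds $\mathcal{I}(\mathcal{P},\mathcal{L})-|A||B||\mathcal{L}|/p$, i.e.\ the main term has coefficient exactly $1$. Your plan subtracts it ``at the start'' with \eqref{f:Vinh} --- which, as you yourself note, gives the unacceptable error $\sqrt{p\,|A||B||\mathcal{L}|}$ --- and then hopes it re-emerges from the rich-line analysis. It cannot, with coefficient $1$: the bound $|\mathcal{L}_{\ge k}|\ll|A|^3|B|^2/k^4$ is simply false for $k$ of order $|A||B|/p$ once $|A||B|^2>p^2$ (take $A=B=\F_p$ and $\mathcal{L}$ all lines: then $|\mathcal{L}_{\ge p}|\ge p^2$ while $|A|^3|B|^2/k^4=p$), so the dyadic scales below the mean richness must be absorbed into a main term of size $O(|A||B||\mathcal{L}|/p)$ with an unspecified constant, which is a strictly weaker conclusion than the one claimed. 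In the proof the paper refers to, this is avoided by working with the deviations $i(\ell)-|A||B|/p$ throughout, so that the exact main term is inherited from the term $|\mathcal{P}||\Pi|/p$ in Theorem \ref{t:Misha+}.
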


The proof rests on a well--known points/planes result from \cite{Rudnev_pp} (also, see \cite{sh_as}, \cite{Vinh}).

\begin{theorem}
	Let $p$ be an odd prime, $\mathcal{P} \subseteq \F_p^3$ be a set of points and $\Pi$ be a collection of planes in $\F_p^3$. 
	Suppose that $|\mathcal{P}| \le |\Pi|$ and that $k$ is the maximum number of collinear points in $\mathcal{P}$. 
	Then the number of point--planes incidences satisfies 
	\begin{equation}\label{f:Misha+_a}
	\mathcal{I} (\mathcal{P}, \Pi)  - \frac{|\mathcal{P}| |\Pi|}{p} \ll |\mathcal{P}|^{1/2} |\Pi| + k |\Pi| \,.	
	\end{equation}
	\label{t:Misha+}	
\end{theorem}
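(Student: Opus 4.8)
The plan is to prove the statement in its dual, matrix form, where the symmetry of the configuration can be exploited through the representation theory of $\SL_2(\F_p)$ recalled in Section~\ref{sec:definitions}. Write $m = |\mathcal{P}|$ and $n = |\Pi|$. First I would pass to projective space and dualise: a point $P$ and a plane $\Pi$ of $\mathbb{P}^3(\F_p)$ are both encoded by homogeneous coordinate vectors in $\F^4_p$, and the incidence $P \in \Pi$ becomes the vanishing of the standard bilinear pairing. After a linear change of coordinates I would identify $\F^4_p$ with the space $M_2(\F_p)$ of $2\times 2$ matrices so that the self--pairing quadratic form becomes the determinant; the incidence relation then reads $B(X,Y)=0$, where $B$ is the polarisation of $\det$ and $X,Y$ are the matrices attached to the point and to the plane. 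Thus it suffices to bound
\[
N := |\{ (X,Y) \in \mathcal{X} \times \mathcal{Y} ~:~ B(X,Y) = 0 \}| - \frac{mn}{p} \,,
\]
where $\mathcal{X}$, $\mathcal{Y}$ are the matrix realisations of $\mathcal{P}$ and $\Pi$, the term $mn/p$ being the contribution of an equidistributed family.

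The key point is that the group $\Gr = \SL_2(\F_p) \times \SL_2(\F_p)$ acts on $M_2(\F_p)$ by $(g,h)\cdot X = g X h^{-1}$, preserving $\det$ and hence $B$. The orbits of a pair $(X,Y)$ are governed by the invariants $\det X$, $\det Y$ and $B(X,Y)$, so on the generic (invertible) stratum the condition $B(X,Y)=0$ can be rewritten, after normalising determinants, as a conjugacy/trace condition on the group element $X^{-1}Y \in \SL_2(\F_p)$. Applying Cauchy--Schwarz to symmetrise $N$, I would recast the count as the number of solutions of a matrix equation over cosets of $\SL_2(\F_p)$, that is, as a convolution of the indicator functions of $\mathcal{X}$ and $\mathcal{Y}$ evaluated on the group.

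At this stage I would expand the convolution on the Fourier side over $\FF{\SL_2(\F_p)}$ using \eqref{f:Fourier_representations}--\eqref{f:Parseval_representations}. The trivial representation reproduces the main term $mn/p$, and the contribution of the nontrivial representations is controlled by the quasirandomness of $\SL_2(\F_p)$: every nontrivial $\pi$ satisfies $d_\pi \ge (p-1)/2 \gg p$, while $\sum_{\pi} d^2_\pi = |\SL_2(\F_p)| \asymp p^3$. Combining this dimension bound with the inequalities $\| AB\|_{HS} \le \| A\|_{op} \|B\|_{HS}$ and $\|A\|_{op} \le \|A\|_{HS}$ recalled above, the contribution of the nontrivial $\pi$ is bounded by a spectral factor times $\sqrt{mn}$; invoking the hypothesis $m\le n$ together with the second--moment step then converts this into the required error $\ll \sqrt{m}\, n$, the exact power of $p$ in the spectral factor being precisely what forces this shape of the estimate.

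Finally I would return to the degenerate strata that the generic argument discards, namely pairs $(X,Y)$ with $X$ or $Y$ singular and, more importantly, the collinear configurations. Collinearity of points of $\mathcal{P}$ corresponds, in the matrix model, to the matrices $X$ lying in a fixed rank--constrained affine subspace; this is exactly where the $\Gr$--action degenerates and the spectral bound is unavailable. Bounding these contributions by hand, using that no line carries more than $k$ points, produces the remaining term $\ll k n$. I expect the main obstacle to be precisely this last step together with the spectral estimate of the previous paragraph: one must isolate the degenerate locus cleanly, verify that its total contribution is genuinely $\ll kn$ and no larger, and track the determinant normalisations so that the principal term comes out exactly as $mn/p$ rather than as a perturbation of it.
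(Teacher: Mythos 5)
Your proposal does not match the paper, and more importantly it cannot work. First, a point of fact: the paper itself gives no proof of Theorem~\ref{t:Misha+} --- it is imported as a black box from Rudnev's paper \cite{Rudnev_pp}. The proof there is not representation-theoretic: it lifts points and planes of projective $3$--space, via the Pl\"ucker/Klein correspondence, to two families of lines lying on the Klein quadric, so that point--plane incidences become pairs of intersecting lines, and then bounds the number of such pairs by the Guth--Katz polynomial method (the theory of ruled surfaces, valid in positive characteristic by work of Koll\'ar). It is precisely this algebraic-geometric step that produces both the $p$--independent error term $|\mathcal{P}|^{1/2}|\Pi|$ and the collinearity term $k|\Pi|$.

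The fatal gap in your plan is the spectral step, and no amount of bookkeeping can repair it. For $\SL_2(\F_p)$ the minimal dimension of a nontrivial irreducible representation is $(p-1)/2$ while $|\SL_2(\F_p)|\sim p^3$, so any Gowers/mixing-type estimate for mean-zero functions carries the factor $\left(|\Gr|/d_{\min}\right)^{1/2}\gg p$; equivalently, the expander-mixing bound for the point--plane incidence graph in $\F_p^3$ has error of order $p\sqrt{|\mathcal{P}||\Pi|}$ (the three-dimensional analogue of \eqref{f:Vinh}). Such bounds are inherently $p$--dependent, whereas the bound you must prove has no $p$ in the error at all. Concretely, take $|\mathcal{P}|=|\Pi|=m$ with $m\le p^{2}$ and $k$ bounded: the claimed error is $\ll m^{3/2}$, while every quasirandomness argument of the kind you describe gives at best $\approx p\sqrt{m\cdot m}=pm\ge m^{3/2}$, with equality only at $m=p^2$. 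So in exactly the regime where Theorem~\ref{t:Misha+} is nontrivial and where this paper uses it (Theorems \ref{t:line/point_as} and \ref{t:energy_bricks_Heisenberg} concern small sets), your spectral factor does not ``convert into the required error $\ll\sqrt{m}\,n$'': the power of $p$ never cancels. Quasirandomness sees only global cancellation at scale $p$; the strength of Rudnev's theorem is combinatorial-algebraic and lies beyond the square-root barrier of mixing arguments. (Your deferred treatment of the degenerate strata is also where the real work sits --- the $k|\Pi|$ term emerges from the ruled-surface analysis, not from an elementary count --- but the proof already fails before that point.)
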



\section{On products of large subsets of the affine group and the Heisenberg group}
\label{sec:large_sets}



Let $n\ge 1$ be a positive integer. 
By $\HH_n$ define the Heisenberg linear group over $\F$ consisting of matrices 
\[
[x,y,z] = [\v{x},\v{y},z] := 
\left( {\begin{array}{ccc}
	1 & \v{x} & z \\
	\v{0}^n & \v{1}^n & \v{y} \\
	\v{0}^n & 0 & 1 \\
	\end{array} } \right) \,.
\]
For $n=1$ we write $\HH = \HH_1$. 
The product rule in $\HH_n$ is 
\begin{equation}\label{f:Heisenberg_low}
[x,y,z] \cdot [x',y',z'] = [x+x',y+y',z+z'+xy'] \,,  
\end{equation}
where $xy'$ is the scalar product of vectors $\v{x}$ and $\v{y'}$. 
Also, one has 
\begin{equation}\label{f:Heisenberg_inverse}
[x,y,z]^{-1} = [-x,-y,-z+xy] \,.  
\end{equation}
Clearly, $|\HH_n| = |\F|^{2n+1}$ and there are $|\F|^{2n} + |\F|-1$ conjugate classes of the form $[x,y,0]$, $(x,y) \neq 0$ and $[0,0,z]$, $z\in \F$. 
For any $a,a' \in \HH_n$, $a=[x,y,z]$, $a'=[x',y',z']$, their commutator equals $[a;a'] = [0,0,xy'-yx']$.
Thus the centre of $\HH_n$ is $[0,0,z]$, $z\in \F$ and hence $\HH_n$ is a two--step nilpotent group. 
Given $[x_0,y_0,z_0] \in \HH_n$, we see that the centralizer $\mathrm{C} ([x_0,y_0,z_0]) = \{ [x,y,z] ~:~ xy_0 = x_0 y \}$. 
The Heisenberg group $\HH_n$ acts on $\F^n \times \F^n$ as 
\[
\left( {\begin{array}{c}
	X \\
	Y \\
	\end{array} } \right)
=
\left( {\begin{array}{cc}
	1 & a \\
	0 & 1 \\
	\end{array} } \right)
\left( {\begin{array}{c}
	x \\
	y \\
	\end{array} } \right)
+
\left( {\begin{array}{c}
	b \\
	c \\
	\end{array} } \right)
=
\left( {\begin{array}{c}
	x+ay+b \\
	y+c \\
	\end{array} } \right) \,,
\]
and hence $\Stab((x,y)) = \{ [a,b,0] ~:~ ay+b=0 \}$. 
Further the structure of $\FF{\HH}_n$ is well--known, see, e.g., \cite{Schulte}. 
There are $|\F|^{2n}$ one--dimensional representations which correspond to additive characters for $x,y$, see the group low \eqref{f:Heisenberg_low} and there is a unique nontrivial representation $\pi$  of dimension $|\F|^n$. 
Thus formula \eqref{f:inverse_representations} has the following form
\begin{equation}\label{f:Heisenberg_representations}
f([x,y,z]) = \frac{\d_f (x,y)}{|\F|}+ \frac{\langle \FF{f} (\pi), \pi ([x,y,z]^{-1}) \rangle_{HS}}{|\F|^{n+1}} \,,	
\end{equation}
where $\d_f (x,y) = \sum_z f([x,y,z])$. 
Let us describe the representation $\pi$ in details in the case $n=1$, see, e.g., \cite{Schulte}. 
Let $\zeta = e^{2\pi i/p}$ and $\mathcal{D} = diag(1,\zeta, \dots, \zeta^{p-1})$ and 
\[
W_a  = 
\left( {\begin{array}{ccccc}
	0 & 1 & 0 & \dots & 0  \\
	0 & 0 & 1 & 0 & \dots  \\
	\dots & \dots & \dots & \dots & \dots \\
	0 & \dots & \dots & 0 & 1\\
	a & 0 & \dots & \dots  & 0 \\
	\end{array} } \right) 
\]
be $(p\times p)$ matrix. 
Then $\pi ([x,y,z]) := \zeta^{z+y} \mathcal{D}^y W^x_{\zeta}$. 
The fact that $\pi$ is a representation follows from an easy checkable commutative identity 
\begin{equation}\label{id:Heisenberg_comm}
\zeta^{xy'} \mathcal{D}^{y'} W^x_\zeta = W^x_\zeta \mathcal{D}^{y'} \,.
\end{equation}
Thus there is just one nontrivial representation $\pi$ and a similar situation takes place in the case of the affine group $\Aff (\F)$, see below.

Now we obtain a lemma on products of sets in $\HH_n$.
A similar result was obtained in \cite[Propositions 3--6 and Theorem 1]{HH_MJCNT} but for a special family of sets which are called {\it semi--bricks}.
Given a set $A\subseteq \HH$ we write $K^{-1} (A):= |A|^{-1} \max_{x,y} \d_A (x,y)$.
Hence from the definition of the quantity  $K=K(A)$ one has that for any $x,y\in \F$ the following holds $\d_A (x,y) \le |A|/K$. 

\begin{theorem}
	Let $A,B\subseteq \HH$, $|A| |B| > p^{5}$. 
	Then  $[A,B]$ contains $[0,0,\F_p]$.\\
	Further if $A \subseteq \HH_n$, then  for any $k\ge 2$ and any signs $\eps_j \in \{-1,1\}$ with $\sum_{j=1}^{2k} \eps_j = 0$  
	the product $\prod_{j=1}^{2k} A^{\eps_j}$ contains   $[0,0,\F_p]$, provided 
	\begin{equation}\label{cond:z_Heisenberg}
	|A| > p^{n+1+n/k} \,.
	\end{equation}
	Finally, for $K = K(A)$ and $k\ge 2$, we have 
	\begin{equation}\label{f:z_Heisenberg}
	|A^k| \ge 2^{-1} \min\left\{ Kp, \frac{|A|^k}{p^{(n+1)(k-1)}} \right\} \,.
	\end{equation}	
	\label{t:z_Heisenberg}
\end{theorem}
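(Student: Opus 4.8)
The plan is to route all three assertions through the Fourier inversion formula \eqref{f:Heisenberg_representations}, exploiting that the only non--trivial irreducible representation $\pi$ has dimension $p^n$ and acts on the centre by the scalar $\pi([0,0,w])=\zeta^{w} I$, so that evaluating a convolution at $[0,0,w]$ splits into an ``abelian'' term coming from the one--dimensional characters and an ``error'' term governed by $M:=\FF{A}(\pi)$. The single quantitative input needed is the Parseval identity \eqref{f:Parseval_representations} applied to the indicator of $A$, which gives $\| M \|^2_{HS} \le |A| p^{n+1}$; crucially I will never estimate $\| M \|_{op}$ beyond the free inequality $\| \cdot \|_{op} \le \| \cdot \|_{HS}$.

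For the commutator statement (here $n=1$) I would argue by incidences. Since $[a,b]=[0,0,xy'-yx']$ depends only on the projections to $\F^2$, the number of commutator pairs producing $[0,0,w]$ is $N(w)=\sum_{xy'-yx'=w}\delta_A(x,y)\delta_B(x',y')$, and I must show $N(w)>0$ for all $w$. Writing $\delta_A=|A|/p^2+f$ with $\sum f=0$, the principal term equals $\tfrac{|A|}{p^2}\cdot p\cdot(|B|-\delta_B(0,0))$ because for fixed $(x',y')\neq 0$ the locus $xy'-yx'=w$ is a line, whereas the remainder is a weighted point--line incidence sum with a mean--zero weight on the lines, so the Vinh bound \eqref{f:Vinh} controls it by $\sqrt p\,\|f\|_2\|\delta_B\|_2$. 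Using the trivial $\|\delta_A\|^2_2\le p|A|$, $\|\delta_B\|^2_2\le p|B|$ the error is at most $p^{3/2}\sqrt{|A||B|}$, so $N(w)\ge \tfrac{|A|}{p}(|B|-p)-p^{3/2}\sqrt{|A||B|}>0$ exactly when $|A||B|>p^5$ (note $|B|>p^2$ then, and $w=0$ is trivial).

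For the product statement I apply the inversion formula to $f=A^{\eps_1}*\cdots*A^{\eps_{2k}}$, reading $A^{-1}$ as the reflected indicator, whose transform is $M^{*}$. On the centre, $f([0,0,w])=\tfrac{\delta_f(0,0)}{p}+\tfrac{\zeta^{w}\tr \FF{f}(\pi)}{p^{n+1}}$; since $\sum_j\eps_j=0$ the image in $(\F^{2n},+)$ is balanced, whence $\delta_f(0,0)=\|\delta^{*k}_A\|^2_2\ge |A|^{2k}/p^{2n}$ by Cauchy--Schwarz. The heart of the matter is the error term: writing $\FF{f}(\pi)=X_1\cdots X_{2k}$ with $X_i\in\{M,M^{*}\}$ and regrouping the trace as the Hilbert--Schmidt pairing $\langle X_1\cdots X_k,(X_{k+1}\cdots X_{2k})^{*}\rangle_{HS}$, Cauchy--Schwarz together with $\|PQ\|_{HS}\le\|P\|_{op}\|Q\|_{HS}$ yields $|\tr\FF{f}(\pi)|\le\|M\|^{2k-2}_{op}\|M\|^2_{HS}\le(|A|p^{n+1})^{k}$. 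Thus the error is at most $|A|^k p^{(n+1)(k-1)}$, and it is dominated by the main term $|A|^{2k}/p^{2n+1}$ precisely when $|A|>p^{n+1+n/k}$, giving $f([0,0,w])>0$ for every $w$, which is \eqref{cond:z_Heisenberg}.

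Finally \eqref{f:z_Heisenberg} is the same computation read as an $L^2$ estimate: from $|A^k|\ge |A|^{2k}/\|A^{*k}\|^2_2$ and Parseval with $\FF{A^{*k}}=M^{k}$ I split $\|A^{*k}\|^2_2$ into the one--dimensional part $\tfrac1p\|\delta^{*k}_A\|^2_2$ and the $\pi$--part $\tfrac1{p^{n+1}}\|M^{k}\|^2_{HS}$; Young's inequality $\|\delta^{*k}_A\|_2\le\|\delta_A\|^{k-1}_1\|\delta_A\|_2$ with $\|\delta_A\|^2_2\le |A|^2/K$ bounds the first by $|A|^{2k}/(pK)$, while $\|M^{k}\|^2_{HS}\le\|M\|^{2(k-1)}_{op}\|M\|^2_{HS}$ and the Parseval bound give $|A|^k p^{(n+1)(k-1)}$ for the second, so $X/(a+b)\ge\tfrac12\min\{X/a,X/b\}$ with $X=|A|^{2k}$ reproduces the claimed minimum. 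The one genuine obstacle is the trace estimate in the product statement: the naive $|\tr\FF{f}(\pi)|\le p^n\|M\|^{2k}_{op}$ only reaches the weaker threshold $|A|>p^{n+1+2n/k}$, and recovering the lost factor by a non--trivial bound on $\|M\|_{op}$ is both painful and, as the argument shows, unnecessary --- splitting the $2k$--fold trace into two Hilbert--Schmidt halves and spending the Parseval bound on each is exactly what makes the exponent $n/k$ sharp.
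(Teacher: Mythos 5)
Your proposal is correct and follows essentially the same route as the paper: Vinh's incidence bound \eqref{f:Vinh} applied to $xy'-yx'=\la$ with weights $\d_A,\d_B$ for the commutator statement, and the inversion formula \eqref{f:Heisenberg_representations} plus the Parseval bound $\| \FF{A}(\pi)\|^2_{HS} \le |A|p^{n+1}$ with Hilbert--Schmidt Cauchy--Schwarz for the product statement and for \eqref{f:z_Heisenberg}. Your minor variations --- making the mean-zero decomposition $\d_A = |A|/p^2 + f$ and the degenerate line $(x',y')=(0,0)$ explicit, splitting the trace into two $k$-fold halves instead of the paper's $(2k-1)+1$ split, and using Young's inequality to bound $\T^{+}_k(\d_A) \le |A|^{2k}/K$ --- are cosmetic and yield the identical estimates.
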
 
\begin{proof}
	We know that for any $a,b \in \HH_n$, $a=[x,y,z]$, $b=[x',y',z']$ their commutator equals $[a;b] = [0,0,xy'-yx']$.
	Hence for any $\la \neq 0$ we must solve the equation $xy'-yx'= \la$, where points  $(x,y)$ and $(x',y')$ are counted with the weights equal  $\d_A$ and $\d_B$. 
	Using Theorem of Vinh \eqref{f:Vinh}, 
	we see that the number solutions to this equation is at least 
	\[
	\frac{|A| |B|}{p} - \| \d_A \|_2 \| \d_B \|_2  \sqrt{p} > 0 \,,
	\] 
	because of our assumption $|A||B| > p^{5}$ and a trivial estimate $\| \d_A \|_2 \| \d_B \|_2 \le p (|A| |B|)^{1/2}$.

	To prove the second part of the theorem  take any $z_*:= [0,0,z] \in [0,0,\F_p]$ and write $S$ for the convolution of $\prod_{j=1}^{2k} A^{\eps_j}$.
	Then  by \eqref{f:convolution_representations}, we have $\FF{S} (\pi) = \prod_{j=1}^{2k} \Cf^{\eps_j} \FF{A} (\pi)$, where $\Cf$ is the conjugation operator.  
	Using \eqref{f:convolution_representations} and  the fact that all one--dimensional representations equal $1$ on $[0,0,\F_p]$, we obtain 
	\[
	r_S (z_*) = \frac{\T^{+}_k (\d_A)}{p} + \frac{\langle \prod_{j=1}^{2k} \Cf^{\eps_j} \FF{A} (\pi), \pi (z_*^{-1}) \rangle_{HS}}{p^{n+1}} 
	\ge 
	\frac{|A|^{2k}}{p^{2n+1}} - \frac{|\langle \prod_{j=1}^{2k-1} \Cf^{\eps_j} \FF{A} (\pi) (\pi), (\pi (z_*^{}) \FF{A}^{\eps_{2k}} (\pi) )^* \rangle_{HS}|}{p^{n+1}} 
	\]
	\begin{equation}\label{tmp:23.06_1}
	\ge
	\frac{|A|^{2k}}{p^{2n+1}} - \| \FF{A}^{} (\pi) \|^{2k-2}_{HS} \cdot \frac{\| \FF{A}^{} (\pi) \|^{2}_{HS}}{p^{n+1}} \ge 
	\frac{|A|^{2k}}{p^{2n+1}} - |A| \cdot \| \FF{A}^{} (\pi) \|^{2k-2}_{HS}  \,. 
	\end{equation}
	Here we have used  the Parseval identity  \eqref{f:Parseval_representations}.
	On the other hand, applying  the Parseval formula again, we get 
	\[
	|A| \ge \frac{\| \FF{A} (\pi) \|^2_{HS}}{p^{n+1}}
	\]
	and hence 
	\[
	\| \FF{A} (\pi) \|^2_{HS} \le |A| p^{n+1} \,.
	\]
	Substituting the last bound into  \eqref{tmp:23.06_1}, we derive
	\[
	r_S (z_*) \ge \frac{|A|^{2k}}{p^{2n+1}} - |A| (|A| p^{n+1})^{k-1} > 0
	\]
	as required.

	To obtain \eqref{f:z_Heisenberg} we use the calculations above and,  applying definition of $\T_k (A)$ from \eqref{def:T_k_ab}, we obtain 
	\[
	\T_k (A) \le p^{-1} \T_k^{+} (\d_A) + |A| \cdot \| \FF{A}^{} (\pi) \|^{2k-2}_{HS}
	\le
	\frac{|A|^{2k}}{pK} + |A| (|A| p^{n+1})^{k-1} = \frac{|A|^{2k}}{pK} + |A|^k p^{(n+1)(k-1)} \,.
	\] 
	Using 
	the Cauchy--Schwarz inequality, we derive $|A|^{2k} \le \T_k (A) |A^k|$
	and hence we complete the proof. 
	$\hfill\Box$
\end{proof}


\begin{remark}	
	A variant of the second part of the lemma above can be obtained for products of different sets and we leave it to the interested reader.  
	Clearly, a lower bound for size of $A$ such that $A^n$ contains $[0,0,\F_p]$  
	is $\Omega_n (p^{n+1})$ 	even in the symmetric case, indeed  just consider all matrices $[0,\F^n_p, P]$, where $|P| < p/(2n)$ is an arithmetic progression. 
	\label{r:A^n}
\end{remark}


Now we need a result from \cite[Lemma 2]{HH_isom}.

\begin{lemma}
	Let $\Gr$ be a group and $X$ be  a maximal subset of $\Gr$ such that 
	\[
	[[a;b];c] = e \,, \quad \mbox{ for any } \quad a,b,c \in X \,.
	\]
	Then $XX=X$. 
	\label{l:3commutator}
\end{lemma}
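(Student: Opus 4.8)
The plan is to establish the two inclusions $X\subseteq XX$ and $XX\subseteq X$ separately, using the maximality of $X$ in each direction. For the first inclusion I would begin by noting that $e\in X$. Indeed, any triple commutator one of whose entries is $e$ is trivial, since $[g;e]=[e;g]=e$ for every $g$; hence $X\cup\{e\}$ still satisfies the defining property $[[a;b];c]=e$, and maximality of $X$ forces $e\in X$. Consequently $x=e\cdot x\in XX$ for every $x\in X$, so that $X\subseteq XX$.

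The substance lies in the reverse inclusion. I would fix $x,y\in X$, set $w=xy$, and reduce matters to showing that $X\cup\{w\}$ again has all triple commutators trivial: once this is verified, maximality gives $w\in X$, and therefore $XX\subseteq X$. The crucial reformulation of the hypothesis is that for all $a,b\in X$ the commutator $[a;b]$ commutes with every $c\in X$; since the set of elements commuting with a fixed element is a subgroup, $[a;b]$ in fact commutes with the whole subgroup $\langle X\rangle$ generated by $X$. I would then write $C$ for the centraliser of $\langle X\rangle$ in $\Gr$, so that $[a;b]\in C$ for all $a,b\in X$; note also that $w\in\langle X\rangle$, whence $\langle X\cup\{w\}\rangle=\langle X\rangle$.

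The computation I would carry out is to show that every inner commutator $[p;q]$ with $p,q\in X\cup\{w\}$ again lies in $C$. The only cases needing work are those involving $w$, and here I would apply the elementary expansion identities
\[
[xy;b]=x[y;b]x^{-1}\cdot[x;b], \qquad [a;xy]=[a;x]\cdot x[a;y]x^{-1},
\]
valid in any group. Because $[y;b],[a;y]\in C$ commute with $x\in\langle X\rangle$, the conjugations are absorbed and these collapse to $[w;b]=[y;b][x;b]$ and $[a;w]=[a;x][a;y]$, each a product of elements of the subgroup $C$ and hence again in $C$; together with the trivial cases $[w;w]=e$ this covers all pairs $p,q\in X\cup\{w\}$. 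Finally, any outer commutator $[[p;q];r]$ with $r\in X\cup\{w\}\subseteq\langle X\rangle$ vanishes, since $[p;q]\in C$ centralises $\langle X\rangle$. Thus $X\cup\{w\}$ satisfies the hypothesis, and maximality yields $w\in X$.

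The main obstacle is precisely the middle step: promoting the pointwise condition $[[a;b];c]=e$ to genuine centrality of $[a;b]$ in $\langle X\rangle$, and then checking through the expansion identities that the new commutators $[w;b]$ and $[a;w]$ remain inside $C$ after the conjugations are cancelled. Everything else amounts to routine bookkeeping over the eight possible placements of $w$ among the three commutator slots.
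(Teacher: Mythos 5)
Your proof is correct. Note that the paper itself contains no proof of this lemma: it is quoted verbatim from \cite{HH_isom} (``Now we need a result from [Lemma 2, HH\_isom]''), so there is no in-paper argument to compare against, and your write-up supplies a complete, self-contained proof. Your route is the natural one and is in the spirit of the cited source: first, $e\in X$ by maximality (any triple commutator with an entry equal to $e$ is trivial), which gives $X\subseteq XX$ --- a step that genuinely matters, since $XX\subseteq X$ alone would not yield equality. Second, the pointwise hypothesis is correctly upgraded to the statement that each $[a;b]$ with $a,b\in X$ lies in the centraliser $C$ of $\langle X\rangle$, because the centraliser of a single element is a subgroup containing $X$. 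Third, the expansion identities
\[
[xy;b]=x[y;b]x^{-1}\cdot[x;b]\,, \qquad [a;xy]=[a;x]\cdot x[a;y]x^{-1}
\]
are both valid for the paper's convention $[a;b]=aba^{-1}b^{-1}$ (I verified them directly), and since $[y;b],[a;y]\in C$ commute with $x\in\langle X\rangle$, they collapse to $[w;b]=[y;b][x;b]$ and $[a;w]=[a;x][a;y]$, so all commutators of $X\cup\{w\}$ stay in the subgroup $C$; as $X\cup\{w\}\subseteq\langle X\rangle$, all triple commutators of $X\cup\{w\}$ vanish, and maximality forces $w\in X$. All case checks are accounted for, so the argument is complete.
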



The first part of Theorem
\ref{t:z_Heisenberg} combined with Lemma \ref{l:3commutator}
imply the following consequence.

\begin{theorem}
	Let $n$ be a positive integer and $\eps \in (0,1/6)$ be any real number. 
	Then there is a finite (nonabelian) group $H$ and a set $A_* \subset H$ with the following properties: \\
	$1)~$ $|A_*| > n$, $|A_* A_*| < 2|A_*|$;\\
	$2)~$ For any $A\subseteq A_*$, $|A| \ge |A_*|^{1-\eps}$ and any finite group $\Gr$ such that   there exists a Freiman $5$--isomorphism from $A$ to $\Gr$, we have $|\Gr| \gg |A|^{1+\frac{1-6\eps}{5}}$. 
	\label{t:HH_isom}
\end{theorem}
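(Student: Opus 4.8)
```latex
\textbf{Proof proposal.}
The plan is to build the example explicitly inside a Heisenberg group and then exploit the rigidity of commutators under Freiman homomorphisms. First I would choose the ambient group to be $\HH = \HH_1$ over $\F_p$ for a suitable prime $p$, and take $A_*$ to be a ``brick'' of the shape $A_* = \{[x,y,z] : x \in X,\, y\in Y,\, z \in \F_p\}$ where $X, Y \subseteq \F_p$ are intervals (or the whole $\F_p$) chosen so that $A_*$ contains a full copy of the centre $[0,0,\F_p]$ in every fibre. The key structural point is that since $A_*$ is a union of full central cosets, the product $A_* A_*$ cannot be much larger than $A_*$ itself: using the product rule \eqref{f:Heisenberg_low}, the $x$- and $y$-coordinates of $A_* A_*$ lie in $X+X$ and $Y+Y$, while the $z$-coordinate is automatically all of $\F_p$ because the centre is already saturated. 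Choosing $X, Y$ to be genuine arithmetic progressions (say intervals) forces $|X+X| < 2|X|$ and $|Y+Y| < 2|Y|$, which gives property $1)$, namely $|A_* A_*| < 2|A_*|$, together with $|A_*| > n$ by taking $p$ large. This is the routine part.

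The substance is property $2)$. Here I would reason by contraposition: suppose $A \subseteq A_*$ with $|A| \ge |A_*|^{1-\eps}$ is Freiman $5$-isomorphic (via some $\rho$) to a subset of a finite group $\Gr$ with $|\Gr|$ small, and derive a contradiction with the first part of Theorem \ref{t:z_Heisenberg}. The mechanism is that a Freiman $5$-isomorphism preserves all relations expressible as balanced words of length at most $5$; in particular it transports commutator identities and the ``triple commutator vanishes'' relation, since the commutator $[a;b] = aba^{-1}b^{-1}$ is a word of length $4$ and a relation of the form $[a;b] = [c;d]$ is a balanced $8$-letter relation that a $5$-isomorphism respects (this is exactly why the constant $5$ appears). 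I would first quantify the first assertion of Theorem \ref{t:z_Heisenberg}: since $|A| \gtrsim |A_*|^{1-\eps} \approx p^{3(1-\eps)}$, we can arrange $|A|^2 > p^5$, so $[A,A]$ already contains the entire central line $[0,0,\F_p]$, which has $p$ elements. Thus inside $A$ there are many pairs realizing $p$ distinct commutator values.

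Next I would push these $p$ distinct commutators through $\rho$ into $\Gr$. Because $\rho$ is a Freiman $5$-isomorphism, the images $\rho(a)\rho(b)\rho(a)^{-1}\rho(b)^{-1}$ inherit all the coincidence and non-coincidence relations among the original commutators that are witnessed by short balanced words, so $\Gr$ must contain at least $\sim p$ distinct commutator elements coming from $\rho(A)$; moreover these lie in a controlled coset structure. Invoking Lemma \ref{l:3commutator}: the subset of $\HH$ on which all triple commutators $[[a;b];c]$ vanish is closed under multiplication, and this relation (a balanced word that a $5$-isomorphism preserves) transfers to $\rho(A)$ inside $\Gr$. Combining the facts that $\Gr$ must accommodate $\gtrsim p$ central-type commutator values while $|\rho(A)| = |A|$, a counting argument forces $|\Gr| \gg |A| \cdot p \gg |A|^{1 + (1-6\eps)/5}$, where the exponent arises from tracking $p \approx |A_*|^{1/3} \approx |A|^{1/(3(1-\eps))}$ against the density loss from passing to $A \subseteq A_*$ and from the $p^5$ threshold in Theorem \ref{t:z_Heisenberg}. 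The main obstacle I expect is the bookkeeping that turns ``$\rho$ preserves length-$5$ balanced words'' into a genuine lower bound on $|\Gr|$: one must verify that the $p$ distinct central commutators really do produce $\gtrsim p$ distinct elements (not collapsing under $\rho$) and then optimize the exponents so that the arithmetic yields precisely $1 + \tfrac{1-6\eps}{5}$, which is where the hypothesis $\eps < 1/6$ enters to keep the exponent positive.
```
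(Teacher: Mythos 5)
Your overall skeleton --- a brick in $\HH$ with full central fibres, Theorem \ref{t:z_Heisenberg} to place $[0,0,\F_p]$ inside $[A,A]$, Lemma \ref{l:3commutator} plus transfer of relations under the $5$--isomorphism --- matches the paper, but two of your concrete steps are wrong, and the second is fatal. First, the ``routine part'' fails as stated: if both $X$ and $Y$ are proper intervals, doubling constants multiply across coordinates, so $|A_* A_*| = |X+X|\,|Y+Y|\,p \approx 4|X||Y|p \approx 4|A_*|$, not $< 2|A_*|$. One must saturate all but one coordinate, as the paper does with $A_* = \{[x,y,z] : x \in \{0,1,\dots,\lceil p^\alpha \rceil\},\ y,z \in \F_p\}$, which gives $|A_*A_*| \le 2|A_*| - p^2$. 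Second, and fatally, your parameters take $|A_*| \approx p^3$, and for such a set property $2)$ is simply \emph{false}, so no bookkeeping can rescue the plan: the identity map $A \to A \subseteq \HH$ is a Freiman $5$--isomorphism into $\Gr = \HH$ with $|\Gr| = p^3$, while your claimed conclusion would force $|\Gr| \gg |A|^{1+\frac{1-6\eps}{5}} \approx p^{3 + 3(1-6\eps)/5} > p^3$ once $|A|$ is near $p^3$ and $\eps < 1/6$. The paper's brick is deliberately \emph{thin}: $\alpha = \frac{1+4\eps}{2-2\eps}$ (the hypothesis $\eps < 1/6$ is exactly what makes $\alpha < 1$), so that $|A_*| \approx p^{2+\alpha}$ and $p^3 = |A_*|^{3/(2+\alpha)} = |A_*|^{1+\frac{1-6\eps}{5}}$; the theorem is sharp against the trivial embedding into $\HH$ itself, and this tuning \emph{is} the construction.

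The same counterexample destroys your key quantitative mechanism. The claim that ``$\Gr$ must accommodate $\gtrsim p$ distinct central commutator values and $|A|$ points, hence $|\Gr| \gg |A|\cdot p$'' is false: since $A$ may be a union of full central cosets, the set $\rho(A)\cdot\{g_z\}_{z\in\F_p}$ can have size exactly $|A|$, and again $\Gr = \HH$ has order $p^3 \ll |A|\,p$; distinctness of the $g_z$ only yields $|\Gr| \ge \max\{|A|, p\}$. The paper's actual mechanism is group--theoretic and is absent from your proposal. One transfers the five--letter--per--side relation $aba^{-1}b^{-1}\,c = c\,aba^{-1}b^{-1}$ (this, not your $[a;b]=[c;d]$ --- which is four letters per side and already preserved by a $4$--isomorphism --- is where the constant $5$ comes from) to conclude via Lemma \ref{l:3commutator} that $\Gr = \langle \rho(A)\rangle$ is two--step nilpotent, hence the direct product of its Sylow subgroups. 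One then extracts from a full central fibre $[u,v,Z] \subseteq A$ (found by averaging) an element $g_{i-j} = \rho([u,v,j])^{-1}\rho([u,v,i])$ of order exactly $p$, via the induction $g_{l(i-j)} = g^l_{i-j}$. Finally the dichotomy on the Sylow $p$--subgroup $\Gr_p$: if $\Gr_p$ is abelian, the transferred commutator identities force $xy' - yx' = 0$ on $A$, whence $|A| \le p^2$, a contradiction; if $\Gr_p$ is nonabelian, then $|\Gr| \ge |\Gr_p| \ge p^3$, because every nonabelian $p$--group has order at least $p^3$. It is this bound $|\Gr| \ge p^3$, converted through the thin exponent $\alpha$, that produces $|A|^{1+\frac{1-6\eps}{5}}$; it cannot be reached by the multiplicative count $|A|\cdot p$ you propose.
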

\begin{proof}  
	The argument follows the scheme of the proof from \cite{HH_isom}. 
	Let 
	\[
	A_* = \{ [x,y,z] ~:~ x\in \{0,1, \dots, \lceil p^\a \rceil \} ,\, y,z \in \F_p   \}  \subseteq \HH \,, 
	\]
	and  we will choose $\a \in (0,1)$ later. 
	Clearly,  $|A_* A_*| \le 2|A_*| - p^2 < 2|A_*|$.
	Take any $A\subseteq A_*$, $|A| \ge |A_*|^{1-\eps}$ and let  $\rho$ be a   Freiman $5$--isomorphism from $A$ to a group $\Gr$. 
	We can assume that $\Gr = \langle \rho (A) \rangle$ and using Lemma \ref{l:3commutator}, we derive that $\Gr$ is   a two--step nilpotent group. 
	If 
	\begin{equation}\label{cond:1}
	|A| >
	p^{2-2\eps + \a (1-\eps)} \ge p^{5/2} \,,
	\end{equation}
	then by Theorem \ref{t:z_Heisenberg} the set $B:= [A,A] \subseteq AAA^{-1}A^{-1}$ contains $[0,0,\F_p]$.
	One satisfies the last condition taking $\a =  \frac{1+4\eps}{2-2\eps}$.
	We write $g_z \in \Gr$ for $g_z = \rho ([0,0,z])$, $z\in \F_p$.  
	Further by the average arguments one can find $u,v\in \F_p$ and a set $Z \subseteq \F_p$ such that $[u,v,Z] \subseteq A$ and for  $p$ large enough the following holds 
	$|Z| \ge |A|/4p^{1+\a} >1$. 
	Taking two distinct elements $[u,v,i]$, $[u,v,j] \in A$ and putting $h_k = \rho ([u,v,k])$, where $k=i,j$, we form $g_{i-j} := h_j^{-1} h_i \in \Gr$, $g_{i-j} \neq e$. 
	Finally, $\rho(B)$ contains $\rho([0,0,\F_p])$, hence $g_{i-j} \in \rho(B)$ and one can check by induction (see \cite{HH_isom}) that for any $l\ge 1$ the following holds $g_{l(i-j)} = g^l_{i-j}$.   
	In particular, the order of $g_{i-j}$ in $\Gr$ is $p$. 
	Consider Sylow $p$--subgroup of $\Gr$ which we denote by $\Gr_p$. 
	Suppose that  $\Gr_p$ is abelian. 
	We know that $[a;a'] = [0,0,xy'-yx']$ for any $a,a'\in A$ and since $\rho$ is $5$--isomorphism and hence $4$--isomorphism, it follows that $xy'-yx'=0$ on $A$, whence $|A| \le p^2$ and this is a contradiction since $|A| > p^{(2+\a)(1-\eps)}$ and this contradicts with our choice of the parameter $\a$ 
	(see details in \cite{HH_isom}). 
	Otherwise, $\Gr_p$ is nonabelian and  in view of \eqref{cond:1} and our choice of $\a$, we obtain 
	\[
	|\Gr| \ge |\Gr_p| \ge p^3 \gg |A_*|^{3/(2+\a)} \ge |A|^{{3/(2+\a)}} \ge |A|^{1+\frac{1-6\eps}{5}}  
	\]
	as required. 
	$\hfill\Box$
\end{proof}

\bigskip 

It is easy to see from the proof that, although,  possibly, the constant $5$ can be improved
but it is the limit of the method.

\bigskip

Now consider the group of invertible affine transformations $\Aff (\F)$ of a field $\F$, i.e., maps of the form  $x \to ax + b$, $a\in \F^*, b\in \F$ or, in other words, the set of matrices 
\[
(a,b) := \left( {\begin{array}{cc}
	a & b  \\
	0 & 1  \\
	\end{array} } \right)	\,, \quad \quad  a\in \F^*\,, \quad b\in \F  \,.
\]
Here we 
associate with such a matrix the vector $(a,b)$.
Then $\Aff (\F)$ is a semi--product $\F_p^* \ltimes \F_p$ 
with the multiplication  $(a,b) \cdot (c,d) = (ac, ad+b)$.
Clearly, $\Aff (\F)$ acts on $\F$. 
For any $a,a' \in \Aff (\F)$, $a=(x,y)$, $a'=(x',y')$, their commutator equals $[a,a'] = (1,y(1-x')-y'(1-x))$.
The group $\Aff (\F)$ contains the standard unipotent subgroup $U = \{ (1,a) ~:~ a \in \F \}$ as well as the standard dilation subgroup $T = \{(a,0) ~:~ a \in \F^*\}$. 
The centralizer $\mathrm{C} (I)$ of $I$ is $\Aff (\F)$, further, if $g = (x,y) \in U \setminus \{I\}$, then $\mathrm{C} (g) = U$ and otherwise
$\mathrm{C} (g) = \Stab(y(1-x)^{-1})$, where $\Stab (x_0) = \{ (a, x_0 (1-a)) ~:~ a\in \F^* \}$. 
The subgroups $U$ and $T$ are maximal abelian subgroups of $\Aff (\F)$.

There are $(|\F|-1)$ one--dimensional representations which correspond to multiplicative characters of $\F^*$ and because there exist precisely $|\F|$ conjugate classes in  $\Aff(\F)$ we see that there is one more nontrivial representation $\pi$ of dimension $|\F|-1$.   
We have an analogue of  formula \eqref{f:Heisenberg_representations}
\begin{equation}\label{f:Aff_representations}
f((x,y)) = \frac{\d_f (x)}{|\F|}+ \frac{\langle \FF{f} (\pi), \pi ((x,y)^{-1}) \rangle_{HS}}{|\F|} \,,	
\end{equation}
where $\d_f (x) = \sum_y f((x,y))$. 
As above let us describe the representation $\pi$ in details, see, e.g., \cite{Celniker}.
We define $\mathcal{D} = diag (1,\zeta^\o, \dots, \zeta^{\o^{p-2}})$, where $\o$ is any primitive root in $\F_p^*$. 
Then $\pi ((x,y)) := \mathcal{D}^y W^{\ind (x)}_1$ (now $W_1$ is $(p-1)\times (p-1)$ matrix).  
An analogue of identity \eqref{id:Heisenberg_comm} is  
\begin{equation}\label{id:Aff_comm}
W^{\ind (a)}_1 \mathcal{D}^{d} = \mathcal{D}^{ad}  W^{\ind (a)}_1  \,.
\end{equation}
Hence as in the case of the Heisenberg group there is just one nontrivial representation $\pi$ of large dimension  and thanks to this similarity we can consider these two groups together. 
Underline it one more time that the trivial representations of  $\HH$ correspond to additive characters but the trivial representations of  $\Aff$ correspond to multiplicative ones.

\bigskip

Put $K^{-1} (A):= |A|^{-1} \max_{x} \d_A (x)$.
Using the same method as in the proof of  Theorem  \ref{t:z_Heisenberg},  one has 

\begin{corollary}
	Let $A\subseteq \Aff (\F_p)$, $|A| > p^{3/2}$. 
	Then $[A,A]$ contains $(1,\F_p)$.\\ 
	Further for any $k\ge 2$ and any signs $\eps_j \in \{-1,1\}$ with $\sum_{j=1}^{2k} \eps_j = 0$  
	the product $\prod_{j=1}^{2k} A^{\eps_j}$ contains  $(1,\F_p)$,   provided 
	\begin{equation}\label{cond:z_Aff}
	|A| > p^{1+1/k} \,.
	\end{equation}
	For $K = K(A)$ and $k\ge 2$, we have 
	\begin{equation}\label{f:z_Aff}
	|A^k| \ge 2^{-1} \min\left\{ Kp, \frac{|A|^k}{p^{k-1}} \right\} \,.
	\end{equation}	
	\label{c:z_Aff}
\end{corollary}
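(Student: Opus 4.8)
The plan is to transcribe the three--part argument that proves Theorem~\ref{t:z_Heisenberg}, replacing the additive structure of the centre $[0,0,\F_p]$ of $\HH_n$ by the multiplicative structure carried by the first coordinate of $\Aff(\F_p)$. The representation data I would feed into the scheme are the ones recorded around \eqref{f:Aff_representations}: there is a single nontrivial representation $\pi$ with $d_\pi = p-1$, while $|\Aff(\F_p)| = p(p-1)$, and $\d_f(x) = \sum_y f((x,y))$ is supported on $\F_p^* = \F_p \setminus \{0\}$. Under this dictionary the ``additive energy of $\d_A$'' appearing in the Heisenberg computation becomes the multiplicative energy of $\d_A$, and the dimension bookkeeping changes by $p^{n+1} \mapsto p$ and $p^{2n+1} \mapsto p(p-1)$.

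For the first assertion I would fix $\la \in \F_p$ and count the solutions of the commutator equation $y(1-x') - y'(1-x) = \la$ with $(x,y),(x',y') \in A$. For each fixed $(x,y) \neq I$ this is a line in the $(x',y')$--plane, so the number of solutions is exactly an incidence count between the point set $A$ and an equicardinal family of $|A|$ lines. Applying Vinh's bound \eqref{f:Vinh} to the balanced characteristic functions I expect a lower bound of the shape $|A|^2/p - \sqrt{p}\,|A|$, which is positive as soon as $|A| > p^{3/2}$; this forces $(1,\la) \in [A,A]$ for every $\la$, i.e.\ $(1,\F_p) \subseteq [A,A]$.

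For the containment $(1,\F_p) \subseteq \prod_{j=1}^{2k} A^{\eps_j}$ I would fix $z_* = (1,w)$, let $S$ be the associated convolution, and expand $r_S(z_*)$ through \eqref{f:Aff_representations}. Each one--dimensional representation is a multiplicative character and hence equals $1$ on the first coordinate $x=1$, so the trivial part is $\d_S(1)/p$; since $\sum_j \eps_j = 0$, the quantity $\d_S(1)$ is exactly the multiplicative higher energy $\T^{\times}_k(\d_A)$, i.e.\ the number of solutions of $x_1 \cdots x_k = x'_1 \cdots x'_k$ weighted by $\d_A$. As $\d_A$ is supported on $\F_p^*$, Cauchy--Schwarz gives $\T^{\times}_k(\d_A) \ge |A|^{2k}/(p-1)$, so the main term $\d_S(1)/p$ is at least $|A|^{2k}/p^2$. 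For the $\pi$--part I would repeat the chain of inequalities culminating in \eqref{tmp:23.06_1}, using $\|AB\|_{HS} \le \|A\|_{op}\|B\|_{HS}$ together with the Parseval consequence $\|\FF{A}(\pi)\|^2_{HS} \le |A|p$ (now read off from $d_\pi = p-1$ and $|\Aff(\F_p)| = p(p-1)$); this bounds the error by $|A|^k p^{k-1}$. Comparing the two terms gives $r_S(z_*) > 0$ whenever $|A|^k > p^{k+1}$, that is $|A| > p^{1+1/k}$, which is \eqref{cond:z_Aff}.

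Finally, for \eqref{f:z_Aff} I would run the same expansion at $z_* = e = (1,0)$ to bound the group higher energy $\T_k(A) = r_S(e)$ from above, now invoking the reverse inequality $\T^{\times}_k(\d_A) \le |A|^{2k}/K$, which follows from $\max_x \d_A(x) = |A|/K$ and the elementary $\ell^\infty \cdot \ell^1$ estimate on the $k$--fold multiplicative convolution of $\d_A$. This yields $\T_k(A) \le |A|^{2k}/(pK) + |A|^k p^{k-1}$, and combining the Cauchy--Schwarz inequality $|A|^{2k} \le \T_k(A)\,|A^k|$ with the elementary bound $1/(a+b) \ge \tfrac12 \min\{1/a,1/b\}$ reproduces \eqref{f:z_Aff}. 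I expect no conceptual obstacle: the only genuinely new input beyond the Heisenberg case is the correct bookkeeping of the constants $d_\pi$, $|\Aff(\F_p)|$ and the multiplicative support $\F_p^*$ of $\d_A$, so the real work is merely to confirm that these substitutions regenerate the exponents $3/2$, $1+1/k$ and the final shape $\min\{Kp,\ |A|^k/p^{k-1}\}$.
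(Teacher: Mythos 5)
Your proposal is correct and is essentially the paper's own argument: the paper proves Corollary \ref{c:z_Aff} only by the remark ``using the same method as in the proof of Theorem \ref{t:z_Heisenberg}'', and your transcription supplies exactly the intended details --- Vinh's bound \eqref{f:Vinh} applied to the commutator equation $y(1-x')-y'(1-x)=\la$ (giving $|A|^2/p-\sqrt{p}\,|A|>0$ for $|A|>p^{3/2}$), the expansion via \eqref{f:Aff_representations} with the single nontrivial representation of dimension $p-1$, the Parseval consequence $\| \FF{A}(\pi)\|^2_{HS}\le |A|p$, and the multiplicative energy bounds $\T^{\times}_k(\d_A)\ge |A|^{2k}/(p-1)$ and $\T^{\times}_k(\d_A)\le |A|^{2k}/K$ feeding into the Cauchy--Schwarz step $|A|^{2k}\le \T_k(A)|A^k|$. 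This correctly regenerates the thresholds $p^{3/2}$ and $p^{1+1/k}$ and the bound \eqref{f:z_Aff}, so there is nothing to add.
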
 


As in Remark 	\ref{r:A^n} a lower bound for size of $A$ such that $A^n$ contains $(1,\F_p)$ is $\Omega (p)$  because one can consider the set of all matrices $(\F_p, 0)$ as an example.

Let us demonstrate just one particular usage of Corollary \ref{c:z_Aff}. 

\begin{example}
	Let $A=\{ (a,b) ~:~ a\in \F^*_p,\, b\in \F_p \} \subseteq \Aff(\F_p)$ and $|A| >p^{3/4}$.  
	Then considering $A^{-1}AA^{-1}A$, we see that for any $\la \in  \F_p$ there are $a_i,b_i,c_i \in A$ such that 
	\[
	a(b_1-d_1) + c_1 (b-d) = \la c c_1 = \la a a_1 \,.
	\]
\end{example}

\section{On products of bricks in the Heisenberg group and in the  affine group}
\label{sec:proof}


Now let us obtain an upper bound for the energy of {\it bricks} in $\HH$, see the definition in Theorem \ref{t:energy_bricks_Heisenberg} below. 
In particular, it gives a lower bound for size of the product set of such sets.

\begin{theorem}
	Let $\mathcal{A} = \{ [x,y,z] ~:~ x\in X,\, y\in Y,\, z\in Z \} \subseteq \HH$ be a set. 
	Put $M= \max\{|X|, |Y| \}$.
	Then
	\begin{equation}\label{f:energy_bricks_Heisenberg}
	\E (\mathcal{A}) \lesssim \frac{\E^{+}(Z) |X|^3 |Y|^3}{p} + \E^{+} (Z) |X| |Y| (|X||Y| M^{1/2} + M^2) + \mathcal{E} \,,
	\end{equation}
	where $\mathcal{E}$ is
	\begin{equation*}\label{f:energy_bricks_Heisenberg2}
	\min\left\{\frac{|Z|^4 \E^{+} (X) \E^{+} (Y)}{p} + |Z|^4 |X|^{1/4} |Y|^{9/4} \E^{+} (X)^{3/4}, \frac{|X|^3 |Y|^3 |Z|^4}{p} + (|X||Y|)^{5/2} |Z|^2 \E^{+} (Z)^{1/2} \right\}
	\,.
	\end{equation*}
	\label{t:energy_bricks_Heisenberg}
\end{theorem}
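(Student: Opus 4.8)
The plan is to unfold the nonabelian energy directly from the group law and reduce it to a single weighted incidence count of the kind handled by Theorem \ref{t:line/point_as}. Writing $a_i=[x_i,y_i,z_i]$ and $b_i=[x_i',y_i',z_i']$ with $x_i,x_i'\in X$, $y_i,y_i'\in Y$, $z_i,z_i'\in Z$, the defining relation $a_1 b_1^{-1}=a_2 b_2^{-1}$ unpacks, via \eqref{f:Heisenberg_low} and \eqref{f:Heisenberg_inverse}, into the three scalar equations
\[
x_1-x_1'=x_2-x_2', \qquad y_1-y_1'=y_2-y_2', \qquad (z_1-z_1')-(z_2-z_2')=(x_1-x_1')(y_1'-y_2') \,.
\]
The first two equations only fix difference variables, so summing out everything that does not enter the third one leaves the clean identity
\[
\E(\mathcal A)=\sum_{u,w} r_{X-X}(u)^2\, r_{Y-Y}(w)^2\, g(uw), \qquad g(t):=\sum_{s} r_{Z-Z}(s)\, r_{Z-Z}(s+t) \,,
\]
where $g$ is the additive autocorrelation of $r_{Z-Z}$, so $g(0)=\E^{+}(Z)$ and $0\le g(t)\le \E^{+}(Z)$ for all $t$. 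This reduction is the heart of the matter: it exhibits the sum--product interplay, pairing an additive quantity of $Z$ against the product $uw$ of the difference variables of $X$ and $Y$.

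Next I would isolate the degenerate contribution $uw=0$, i.e. the ranges $u=0$ and $w=0$, where $g(uw)=\E^{+}(Z)$ and the remaining sums collapse to $\E^{+}(Z)\big(|X|^2\E^{+}(Y)+|Y|^2\E^{+}(X)\big)$ (the two ranges overlapping only at $u=w=0$); estimating the energies $\E^{+}(X),\E^{+}(Y)$ against $M$ and combining with the trivial incidence bound \eqref{f:inc_trivial} is where I expect the first two summands $\frac{\E^{+}(Z)|X|^3|Y|^3}{p}$ and $\E^{+}(Z)|X||Y|(|X||Y|M^{1/2}+M^2)$ to originate. The substantial part is the non-degenerate range $uw\neq0$. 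There I would expand $g(uw)=\sum_{s'-s=uw} r_{Z-Z}(s)\,r_{Z-Z}(s')$ and read the whole expression as a weighted point--line incidence count in $\F_p^2$: the equation $uw=s'-s$ says that the point $(u,s)$ lies on the line $\{(\xi,\eta)\colon w\xi+\eta=s'\}$ of slope $-w$, so the point set is the Cartesian product $(X-X)\times(Z-Z)$ and the lines are indexed by $(Y-Y)\times(Z-Z)$, every object carrying the weight $r_{X-X}^2$, $r_{Z-Z}$, $r_{Y-Y}^2$, $r_{Z-Z}$ accordingly.

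After a dyadic decomposition of the four weights the problem reduces to unweighted incidences between level sets of a Cartesian point set and a family of lines, exactly the regime of Theorem \ref{t:line/point_as}. Its main term $|\mathcal P||\mathcal L|/p$ should reassemble, upon summing over the dyadic scales, into $\frac{|Z|^4\E^{+}(X)\E^{+}(Y)}{p}$ and its genuine error term into $|Z|^4|X|^{1/4}|Y|^{9/4}\E^{+}(X)^{3/4}$, producing the first of the two expressions inside the minimum $\mathcal E$. Running the identical argument with the roles of $X$ and $Y$ interchanged (so that $Y-Y$ now supplies the point-rows and $X-X$ the slopes) charges the savings instead to $\E^{+}(Z)$ and yields $\frac{|X|^3|Y|^3|Z|^4}{p}+(|X||Y|)^{5/2}|Z|^2\E^{+}(Z)^{1/2}$; keeping whichever estimate is smaller gives $\mathcal E$.

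The step I expect to be the main obstacle is the optimal application of Theorem \ref{t:line/point_as} to the weighted problem. One has to carry four independent weight scales through the dyadic pigeonholing without losing more than logarithmic factors, and --- crucially --- one must verify that in each of the two groupings the point set genuinely retains its Cartesian-product form, since it is this structure (rather than the mere cardinalities) that supplies the exponent $3/4$ instead of the weaker estimate coming from \eqref{f:inc_trivial} or a Szemer\'edi--Trotter count. A secondary bookkeeping point is to check that the lower-order incidence terms $|\mathcal L|$ and $|\mathcal P|$ in \eqref{f:line/point_as}, together with the overlap of the $u=0$ and $w=0$ ranges, are absorbed into the degenerate contribution already extracted.
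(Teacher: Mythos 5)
Your opening identity is algebraically correct for the quantity it computes, but that quantity is not the one for which the stated bound can hold, and this mismatch is fatal rather than cosmetic. You unfolded the relation $a_1b_1^{-1}=a_2b_2^{-1}$ (the literal reading of $\E$ from Section \ref{sec:definitions}), so your third equation is $(z_1-z_1')-(z_2-z_2')=uv$ with $u=x_1-x_1'$ and $v=y_1'-y_2'$ both \emph{differences}; your identity $\E(\mathcal{A})=\sum_{u,v}r_{X-X}(u)^2r_{Y-Y}(v)^2g(uv)$ does hold for this count (the number of $y$--quadruples with $y_1-y_1'=y_2-y_2'$ and $y_1'-y_2'=v$ is indeed $r_{Y-Y}(v)^2$). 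The paper's proof, however, counts solutions of $a_1b_1=a_2b_2$, i.e.\ the system \eqref{f:system_H}, whose third equation $z+z_*+xy_*=z'+z'_*+x'y'_*$ involves the bilinear form $xy_*-x'y'_*$ in the \emph{original} variables, not in differences; this product count $\sum_x r^2_{\mathcal{A}\mathcal{A}}(x)$ is also what Corollary \ref{cor:energy_bricks_Heisenberg_HH} needs, via $|\mathcal{A}\mathcal{A}|\ge |\mathcal{A}|^4/\sum_x r^2_{\mathcal{A}\mathcal{A}}(x)$. In this nonabelian setting the two counts differ by a power of $|\mathcal{A}|$ for bricks, and only the product form satisfies \eqref{f:energy_bricks_Heisenberg}.

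The concrete failure is exactly at your degenerate range $uv=0$, which you hoped to absorb into the first two summands. When $u=0$ the third equation decouples from the $y$--variables and reduces to $z_1-z_1'=z_2-z_2'$, so this range contributes $\E^{+}(Z)\bigl(|X|^2\E^{+}(Y)+|Y|^2\E^{+}(X)-|X|^2|Y|^2\bigr)$, as you say; but this is too large. Take $X=Y=\{1,\dots,N\}$, $Z=\{0\}$, $N\le p^{1/2}$. Then your quantity is at least $|X|^2\E^{+}(Y)\E^{+}(Z)\gg N^5$, whereas the right--hand side of \eqref{f:energy_bricks_Heisenberg} is $\lesssim N^6/p+N^{9/2}+\mathcal{E}\lesssim N^6/p+N^{9/2}+N^{1/4}N^{9/4}(N^3)^{3/4}\ll N^{19/4}\log^{O(1)}N\ll N^5$. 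So the inequality is simply false for the quantity you reduced to: no dyadic bookkeeping or application of Theorem \ref{t:line/point_as} can close a gap of $M^{1/2}$ that is present already in the degenerate terms. By contrast, in the product form the degeneration $z+z_*=z'+z'_*$ does not trivialize the $x,y$--equations: it leaves the multiplicative constraint $xy_*=x'y'_*$, which the paper reduces (after removing the diagonal and zero solutions, costing only $|X||Y|(|X|^2+|Y|^2)+|X|\E^{+}(Y)+|Y|\E^{+}(X)$, a full factor $|X|$ resp.\ $|Y|$ below your terms) to $\sum_w r_{X/Y}(w)r_{(X-X)/(Y-Y)}(w)$ and bounds via the point--plane Theorem \ref{t:Misha+}; that is precisely the source of $\E^{+}(Z)|X||Y|(|X||Y|M^{1/2}+M^2)$. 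To salvage your argument you must restart from \eqref{f:system_H}; your treatment of the non-degenerate range (incidences for $s'-s=uv$) then also has no direct analogue, since the relevant equation becomes $s+xy_*=s'+x'y'_*$, handled by Theorem \ref{t:Misha+} or by the weighted Stevens--de Zeeuw argument on $Y\times Y$ as in the paper.
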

\begin{proof}
	The energy $\E (\mathcal{A})$ equals the number of the solutions to the system
	\begin{equation}\label{f:system_H}
	x+x_* = x'+x'_*,  \quad y+y_* = y'+y'_*, \quad z+z_* + xy_* = z'+z'_* + x'y'_* \,, 
	\end{equation}
	where $x,x',x_*,x'_* \in X$,  $y,y',y_*,y'_* \in Y$,  $z,z',z_*,z'_* \in Z$.
	First of all we consider solutions to \eqref{f:system_H}  with all possible $z,z',z_*,z'_* \in Z$ such that $z + z_* \neq z' + z'_*$.
	Denote by $\sigma_1$ the correspondent  number of the solutions. 
	Then the last equation of our system \eqref{f:system_H} determines a line such that $(x,x') \in X\times X$ and  $(y_*,y'_*) \in Y\times Y$ are counted with the weights $r_{X-X} (x-x')$ and $r_{Y-Y} (y_* - y'_*)$, correspondingly. 
	Clearly, such weights do not exceed $|X|$ and $|Y|$, respectively. 
	Moreover, $\sum_{x,x'\in X} r_{X-X} (x-x') = \E^{+} (X)$ and similar  $\sum_{y,y'\in Y} r_{Y-Y} (y-y') = \E^{+} (Y)$ .   
	Using the pigeonhole principle and applying Theorem \ref{t:line/point_as}, we find a number $0< \D \le |X|$ and a set of lines $L \subseteq X\times X$,
	$\D |L| \le \E^{+} (X)$ such that
	\[
	\sigma_1 \lesssim \frac{|Z|^4 \E^{+} (X) \E^{+} (Y)}{p} + |Z|^4 |Y| \cdot \D |Y|^{5/4} |L|^{3/4} 
	\le 
	\]
	\begin{equation}\label{f:sigma1}
	\le
	\frac{|Z|^4 \E^{+} (X) \E^{+} (Y)}{p} + |Z|^4 |X|^{1/4} |Y|^{9/4} (\E^{+} (X))^{3/4} \,.
	\end{equation}
	Let us give another estimate for $\sigma_1$. 
	Now we crudely bound $r_{X-X} (x-x')$ and $r_{Y-Y} (y_* - y'_*)$ as $|X|$ and $|Y|$, respectively, but treat our equation 
	$z+z_* + xy_* = z'+z'_* + x'y'_*$ as $s + xy_* = s' + x'y'_*$, where $s,s'$ are counted with weights $r_{Z+Z} (s)$, $r_{Z+Z} (s')$. 
	Applying Theorem \ref{t:Misha+} and using the same calculations as above, we obtain 
	\begin{equation}\label{f:sigma1'}
	\sigma_1 \lesssim \frac{|X|^3 |Y|^3 |Z|^4}{p} + (|X||Y|)^{5/2} |Z|^2 \E^{+} (Z)^{1/2}
	\end{equation}
	as required.

	Now consider the remaining case when $z + z_* = z' + z'_*$ and denote the rest by $\sigma_2 / \E^{+} (Z)$. 
	One can check that zero solutions in the remaining variables $x$, $x'$, $y'_*$, $y_*$ as well as solutions with $\a := x/x' = y'_* / y_* = 1$ coins at most 
	\begin{equation}\label{f:interm_est}
	3|X|^2 |Y|^2 + |X| \E^{+} (Y) + |Y| \E^{+} (X) \ll |X| |Y| (|X|^2  + |Y|^2) 
	\end{equation}
	in $\sigma_2$. 
	Thus suppose that $\a \neq 1$ and all variables $x$, $x'$, $y'_*$, $y_*$  do not vanish. 
	We have
	\begin{equation}\label{f:interm}
	(\a-1) x' = x'_*- x_* \,, \quad (\a-1) y_* = y- y' \,.
	\end{equation}
	In particular, $\frac{x'}{y_*} = \frac{x'_* - x_*}{y-y'}$ and if we determine all variables $x',x_*,x'_*, y,y',y_*$ from the last equation, then from \eqref{f:interm}, we know $\a$ and hence 
	recalling $\a = x/x' = y'_* / y_*$, we find the remaining variables $x,y'_*$.  
	Hence 
	\begin{equation}\label{tmp:25.06_1}
	\sigma_2 \le \sum_w r_{X/Y} (w) r_{(X-X)/(Y-Y)} (w) \,.
	\end{equation}
	Using Theorem \ref{t:Misha+}, we get 
	\[
	\sigma_2 \ll \frac{|X|^3 |Y|^3}{p} + |X| |Y| (|X|^2 + |Y|^2) + |X|^2 |Y|^2 (|X|^{1/2} + |Y|^{1/2}) \,.
	\]
	Combining the last estimate, bounds  \eqref{f:sigma1}, \eqref{f:sigma1'} and  \eqref{f:interm_est}, we obtain the required result. 
	$\hfill\Box$
\end{proof}

\bigskip

For example, if $|X|=|Y|=|Z| \le p^{2/3}$, then the result above gives us $\E(\mathcal{A}) \lesssim |\mathcal{A}|^{3-1/6}$.

\bigskip 

Now if $Z=\{0\}$, then we do not need to consider the first case in the proof of Theorem \ref{t:energy_bricks_Heisenberg}, hence $\mathcal{E} = 0$ and whence, we obtain a consequence which is better than \cite[Theorem 2.4]{HH_sum-prod_HH}.

\begin{corollary}
	Let $\mathcal{A} = \{ [x,y,0] ~:~ x,y\in A \} \subseteq \HH$ be a set. 
	Then
	\begin{equation}\label{f:energy_bricks_Heisenberg_HH}
	|\mathcal{A}^2| \gg \min\{ |\mathcal{A}|^{7/4}, p |\mathcal{A}| \} \,.
	\end{equation}
	\label{cor:energy_bricks_Heisenberg_HH} 
\end{corollary}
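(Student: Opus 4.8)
The plan is to read off the corollary from Theorem \ref{t:energy_bricks_Heisenberg} by specializing to $X = Y = A$ and $Z = \{0\}$, and then to convert the resulting energy bound into a lower bound for the product set by Cauchy--Schwarz. First I would record that $|\mathcal{A}| = |A|^2$ and that here $\E^{+}(Z) = 1$ and $M = |A|$. Since $Z = \{0\}$ forces $z + z_* = 0 = z' + z'_*$ in the system \eqref{f:system_H}, only the second case (the quantity $\sigma_2$) of the proof of Theorem \ref{t:energy_bricks_Heisenberg} ever occurs, so $\mathcal{E} = 0$. Crucially, that second case is estimated by a single application of Theorem \ref{t:Misha+} together with the elementary bound \eqref{f:interm_est}, with no dyadic pigeonholing; hence the estimate is a genuine $\ll$ with no logarithmic losses, which is exactly why the corollary can be stated with $\gg$ rather than merely $\gtrsim$.

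Substituting $|X| = |Y| = M = |A|$ and $\E^{+}(Z) = 1$ into the surviving terms of \eqref{f:energy_bricks_Heisenberg}, I would obtain
\[
\E(\mathcal{A}) \ll \frac{|A|^6}{p} + |A|^{9/2}\,,
\]
after noting that the $|A|^4$ contribution (coming from $|X||Y|M^2$ and from \eqref{f:interm_est}) is dominated by $|A|^{9/2}$. Rewriting everything in terms of $|\mathcal{A}| = |A|^2$ turns this into
\[
\E(\mathcal{A}) \ll \frac{|\mathcal{A}|^3}{p} + |\mathcal{A}|^{9/4}\,.
\]

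Finally I would pass from energy to the product set. The quantity $\E(\mathcal{A})$ counts exactly the solutions of $a_1 b_1 = a_2 b_2$ with $a_1,a_2,b_1,b_2 \in \mathcal{A}$, that is $\E(\mathcal{A}) = \sum_{g} r^2_{\mathcal{A}\mathcal{A}}(g)$, so applying Cauchy--Schwarz to $|\mathcal{A}|^2 = \sum_{g \in \mathcal{A}\mathcal{A}} r_{\mathcal{A}\mathcal{A}}(g)$ gives $|\mathcal{A}^2| \ge |\mathcal{A}|^4 / \E(\mathcal{A})$. Combining this with the energy bound yields
\[
|\mathcal{A}^2| \gg \frac{|\mathcal{A}|^4}{p^{-1}|\mathcal{A}|^3 + |\mathcal{A}|^{9/4}}\,,
\]
and a two-case split finishes the argument: when $|\mathcal{A}| \le p^{4/3}$ the term $|\mathcal{A}|^{9/4}$ dominates the denominator and one gets $|\mathcal{A}^2| \gg |\mathcal{A}|^{7/4}$, whereas when $|\mathcal{A}| \ge p^{4/3}$ the term $p^{-1}|\mathcal{A}|^3$ dominates and one gets $|\mathcal{A}^2| \gg p|\mathcal{A}|$; together these give the claimed bound $|\mathcal{A}^2| \gg \min\{|\mathcal{A}|^{7/4}, p|\mathcal{A}|\}$. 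The only point requiring genuine care is the bookkeeping that confirms no logarithmic factor survives once the first case is absent; everything else is substitution and a routine Cauchy--Schwarz.
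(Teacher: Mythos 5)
Your proposal is correct and takes essentially the same route as the paper: the paper obtains the corollary precisely by specializing Theorem \ref{t:energy_bricks_Heisenberg} to $Z=\{0\}$, $X=Y=A$ (so the $\sigma_1$ case never occurs, $\mathcal{E}=0$, and only the log-free $\sigma_2$ estimate survives), and then passing from the energy $\sum_g r^2_{\mathcal{A}\mathcal{A}}(g)$ to $|\mathcal{A}^2|$ by Cauchy--Schwarz with the same two-case split at $|\mathcal{A}| = p^{4/3}$. Your explicit remark that the absence of the pigeonholed $\sigma_1$ term is what upgrades $\lesssim$ to $\ll$, and hence yields the clean $\gg$ in the statement, is exactly the point the paper makes (tersely) in the sentence preceding the corollary.
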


\begin{remark}
	It was proved in \cite{RS_SL2} that the quantity from \eqref{tmp:25.06_1} can be estimated better for real sets $A\subset \R$, namely, as $O(|A|^{9/2-c})$, where $c>0$ is an absolute constant.
	Hence in $\R$ lower  bound \eqref{f:energy_bricks_Heisenberg_HH} in Corollary  \ref{cor:energy_bricks_Heisenberg_HH}  is even better.  
\end{remark}


We say that two series of  sets $X_i \subseteq \F_p$, $Y_i \subseteq \F_p$ have comparable sizes if for all $i,j \in [n]$ the following holds $|X_i| \ll |X_j|$, $|Y_i| \ll |Y_j|$.
In this case put $\mathcal{X}=\max_{i\in [n]} |X_i|$, $\mathcal{Y}=\max_{i\in [n]} |Y_i|$.

Now we are ready to improve Theorem \ref{t:HH_bricks_Hn} from the Introduction in the situation when $X_i,Y_i$ have comparable sizes. 
It is easy to show that in our result $\eps(n) = \la^n$ for a certain $\la <1$ but in Theorem \ref{t:HH_bricks_Hn} it is just $\eps(n) = O(1/n)$. 
Also, the dependence on $|Z|$ in Theorem \ref{t:bricks_Hn} is better.
Finally, we remark that of course the lower bound $|\mathcal{A}|/p$ for the number of cosets  is optimal.

\begin{theorem}
	Let $n\ge 2$ be an even number, and $X_i,Y_i, Z\subseteq \F_p$, $i\in [n]$,  
	$X = \prod_{i=1}^n X_i \subseteq \F^n_p$, $Y = \prod_{i=1}^n Y_i \subseteq \F^n_p$,
	\[\mathcal{A} = \{ [x,y,z] ~:~ x\in X,\, y\in Y,\, z\in Z \} \subseteq \HH_n\] 
	be sets and $X_i$, $Y_i$ have comparable sizes.
	If $|Z| \le \mathcal{X} \mathcal{Y}$, $\mathcal{X} \le |Z| \mathcal{Y}$, $\mathcal{Y} \le |Z| \mathcal{X}$ and   
	\begin{equation}\label{f:bricks_Hn}
	\mathcal{X} \mathcal{Y} \gtrsim p^{3/2} \cdot \left(\frac{\mathcal{X} \mathcal{Y}}{p|Z|^{1/2}} \right)^{2^{-{n/2}}} \,,
	\end{equation}
	then $\mathcal{A}^2$ contains at least  
	$|\mathcal{A}|/p$ cosets of 
	$[0,0,\F_p]$. 
	\label{t:bricks_Hn}
\end{theorem}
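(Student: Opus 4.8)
The plan is to produce the full central cosets inside $\mathcal{A}^2$ by the representation formula \eqref{f:Heisenberg_representations}, winning the factor $2^{-n/2}$ from the Cartesian (brick) structure of $X,Y$ via the tensor factorisation of the Schr\"odinger representation. A coset $[a,b,\F_p]$ lies in $\mathcal{A}^2$ exactly when $r_S([a,b,c])>0$ for every $c\in\F_p$, where $S=\mathcal{A}*\mathcal{A}$. Expanding $r_S$ by the convolution rule \eqref{f:convolution_representations} and the inversion formula \eqref{f:Heisenberg_representations}, summed over the large representations $\pi_\la$ ($\la\in\F^*_p$, central character $z\mapsto\zeta^{\la z}$), one gets a main term $\d_S(a,b)/p=r_{X+X}(a)r_{Y+Y}(b)|Z|^2/p$, which is independent of $c$, plus the error $p^{-(n+1)}\sum_\la\langle\FF{\mathcal{A}}(\pi_\la)^2,\pi_\la([a,b,c]^{-1})\rangle_{HS}$. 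Hence the coset is full once the main term exceeds the error uniformly in $c$, and I would count the admissible pairs $(a,b)$ by a second--moment argument from $\sum_{a,b}\d_S(a,b)=|X|^2|Y|^2|Z|^2$ and $\sum_{a,b}\d^2_S(a,b)=\E^{+}(X)\E^{+}(Y)|Z|^4$, which yields at least $|\mathcal{A}|/p$ full cosets as soon as the error is beaten (this is where the conditions on $|Z|$ enter, together with $\E^{+}(X)\le|X|^3$).

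The structural heart comes next. Because $X=\prod_iX_i$, $Y=\prod_iY_i$ and, by \eqref{id:Heisenberg_comm}, $\pi_\la([x,y,z])$ factorises as $\zeta^{\la z}\bigotimes_{i=1}^n\tau_\la(x_i,y_i)$, the Fourier matrix splits as $\FF{\mathcal{A}}(\pi_\la)=\FF{Z}(\la)\bigotimes_{i=1}^nT_i$, where $T_i$ is the $\HH$--brick operator of $X_i\times Y_i$ and $\FF{Z}(\la)=\sum_{z\in Z}\zeta^{\la z}$. Using multiplicativity of $\|\cdot\|_{op}$ and $\|\cdot\|_{HS}$ on tensor products, orthogonality of the Weyl operators (so $\|T_i\|^2_{HS}=p|X_i||Y_i|$), the inequalities $\|AB\|_{HS}\le\|A\|_{op}\|B\|_{HS}$ and $\|A\|_{op}\le\|A\|_{HS}$, together with $\|\pi_\la(g)\|_{HS}=p^{n/2}$ and $\sum_{\la\ne0}|\FF{Z}(\la)|^2\le p|Z|$, I would bound the error by a quantity of the shape $|Z|\,(|X||Y|)^{1/2}\prod_i\|T_i\|_{op}$. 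By comparability of the $X_i,Y_i$ this is $\approx\mu^n$, with $\mu$ the typical single--coordinate operator norm, so the whole problem reduces to estimating $\mu=\|T\|_{op}$ for a planar brick.

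Finally, the crux and the origin of $2^{-n/2}$. The trivial bound $\mu\le\|T\|_{HS}=(p|X_i||Y_i|)^{1/2}$ only reproduces a density threshold $3/4$ with no gain in $n$, so one must do better, and the improvement should come from a recursion on the number of coordinates. Since $n$ is even I would pair the coordinates and peel them off a pair at a time, estimating $\|T\|_{op}$ through a higher moment $\tr\big((TT^*)^m\big)$, which is a Heisenberg energy of the planar brick $X_i\times Y_i$ and is controlled by the Cartesian--product incidence bounds of Theorems \ref{t:line/point_as} and \ref{t:Misha+} (the first nontrivial moment being the energy $\E$ bounded in Theorem \ref{t:energy_bricks_Heisenberg}). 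Each peeling should cost only a square root of the corresponding incidence estimate, so that after $n/2$ rounds the residual trivial factor is raised to the power $2^{-n/2}$ while the leading term stabilises near $p^{3/2}$; comparing with the main term then gives exactly \eqref{f:bricks_Hn}, the conditions $|Z|\le\mathcal{X}\mathcal{Y}$, $\mathcal{X}\le|Z|\mathcal{Y}$, $\mathcal{Y}\le|Z|\mathcal{X}$ serving to select the dominant term among the several in the energy bounds at each level. I expect the main obstacle to be precisely the design of this recursion: proving that one pair of coordinates can be removed at the cost of a single square root, and that the incidence estimates stay in their effective range at every step, so that the accumulated loss is $2^{-n/2}$ rather than a weaker power. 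The representation--theoretic reduction and the final counting are, by contrast, routine.
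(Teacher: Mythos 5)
Your plan has a genuine gap, and it sits exactly where the theorem's content lies. You correctly identify that everything hinges on a mechanism producing the exponent $2^{-n/2}$, but you leave that mechanism unproved (``the main obstacle''), hoping for a recursion in which peeling off a pair of coordinates costs a square root. Inside the tensor framework you set up, no such recursion can exist: since $\FF{\mathcal{A}}(\pi_\la)=\FF{Z}(\la)\bigotimes_{i=1}^{n}T_i$, every quantity you propose to work with is exactly multiplicative over coordinates, namely $\bigl\|\bigotimes_i T_i\bigr\|_{op}=\prod_i\|T_i\|_{op}$ and $\tr\bigl(\bigl(\bigotimes_i T_iT_i^*\bigr)^m\bigr)=\prod_i \tr\bigl((T_iT_i^*)^m\bigr)$, so norms and all higher moments decouple coordinate by coordinate and cross--coordinate compounding is impossible. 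The best you can then extract is a fixed per--coordinate saving: if $\|T_i\|_{op}\le\mu$ for every $i$, your error has the shape $|Z|(|X||Y|)^{1/2}\mu^n$ against a main term at most $|Z|^2|X||Y|/p$, and writing $\mu=(\mathcal{X}\mathcal{Y})^{\a}p^{\beta}$ for whatever fixed exponents an incidence bound supplies, the resulting threshold takes the form $\mathcal{X}\mathcal{Y}\gtrsim p^{\beta/(1/2-\a)}\cdot(p/|Z|)^{O(1/n)}$. An $O(1/n)$ correction in the exponent is precisely the older Hegyvari--Hennecart result (Theorem \ref{t:HH_bricks_Hn}) that Theorem \ref{t:bricks_Hn} is designed to surpass; the exponential gain $\la^n$, which the paper explicitly emphasizes as the improvement, is unreachable along this route.

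The paper's proof uses no representation theory at all for this statement. First, a symmetrisation replaces $X_i,Y_i$ by $\t{X}_i=X_i\cap(a_i-X_i)$, $\t{Y}_i=Y_i\cap(b_i-Y_i)$: positivity, for every $\la$, of the number of solutions of $z+z'+\sum_i x_iy''_i=\la$ with $x\in\prod_i\t{X}_i$, $y''\in\prod_i\t{Y}_i$, $z,z'\in Z$ yields the full coset $[\vec{a},\vec{b},\F_p]\subseteq\mathcal{A}^2$, and a pigeonhole over the admissible $(\vec{a},\vec{b})$ produces at least $|\mathcal{A}|/p$ cosets; substituting the symmetrised sizes $|X_i|^2/p$, $|Y_i|^2/p$ into the internal condition \eqref{f:bricks_Hn_inside} is what converts it into \eqref{f:bricks_Hn} (this also sidesteps a secondary weakness of your plan: a second--moment count of pairs $(a,b)$ controls the main term $\d_S(a,b)/p$ only on average, whereas fullness of a coset needs the error beaten for each individual $(a,b)$ uniformly in $c$). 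Second --- and this is the step your proposal would need --- the count $\sigma_{n/2}$ of solutions of $z+z'+x_1y'_1+\dots+x_ny'_n=\la$ is estimated by induction on pairs of coordinates via the point--plane theorem (Theorem \ref{t:Misha+}), with the recursion $\mathcal{E}_{k+1}\ll(\mathcal{X}\mathcal{Y})^{3/2}\mathcal{E}^{1/2}_k\cdot|Z|(\mathcal{X}\mathcal{Y})^{k}$: the \emph{entire} error of the $k$--pair problem enters the next step under a square root, because the incidence argument (as in \cite[Theorem 32]{sh_as}) couples the peeled pair with the global count over all remaining coordinates. That coupling is precisely what the factorised operator picture forbids, and it is the sole source of $2^{-n/2}$; the conditions $|Z|\le\mathcal{X}\mathcal{Y}$, $\mathcal{X}\le|Z|\mathcal{Y}$, $\mathcal{Y}\le|Z|\mathcal{X}$ are used there to keep the Rudnev bound in its effective range at every step, not to select among terms of an energy estimate.
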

\begin{proof}
	It is enough to prove that $\mathcal{A}^2$ contains $[0,0,\F_p]$,  provided 
	\begin{equation}\label{f:bricks_Hn_inside}
	\mathcal{X} \mathcal{Y} \gtrsim p \cdot \left(\frac{\mathcal{X} \mathcal{Y}}{|Z|} \right)^{2^{-{n/2}}} \,,
	\end{equation}	
	because then \eqref{f:bricks_Hn} 
	follows by arguments from \cite[Theorem 1.3]{HH_Hn}. 
	Indeed, if we replace $X_i$, $Y_i$ by $\t{X}_i := X_i \cap (a_i - X_i)$,  $\t{Y}_i := Y_i \cap (b_i - Y_i)$ for some $a_i,b_i$ and consider  
	the correspondent set $\t{\mathcal{A}}$, then by the group low \eqref{f:Heisenberg_low} the inclusion  $[0,0,\F_p] \subseteq \t{\mathcal{A}}^2$ implies  
	$[\vec{a},\vec{b},\F_p] \subseteq \mathcal{A}^2$, where $\vec{a} = (a_1, \dots, a_n)$, $\vec{b} = (b_1, \dots, b_n)$.
	Further notice that the set $\Omega_i = \{ a ~:~ |X_i \cap (a - X_i)| \ge \zeta  |X_i|^2 /p\}$ has size $|\Omega_i| \ge (1-\zeta) |X_i|$
	and hence taking $\zeta$ such that $(1-\zeta)^{2n} \ge 1/2$ we can find at least 
	\begin{equation}\label{tmp:04.07_1}
	(1-\zeta)^{2n} \prod_{i=1}^n |X_i| \ge |\mathcal{A}|/(2|Z|) \ge |\mathcal{A}|/p
	\end{equation}
	vectors $\vec{a}$, $\vec{b}$ with $|\t{X}_i|\ge \zeta  |X_i|^2 /p$, $|\t{Y}_i|\ge \zeta  |Y_i|^2 /p$. 
	To get \eqref{tmp:04.07_1} 
	we have used the fact that $|Z|< p/2$ because otherwise Theorem \ref{t:bricks_Hn} is trivial. 
	Substitution $\t{X}_i$, $\t{Y}_i$ into \eqref{f:bricks_Hn_inside} gives the desired condition \eqref{f:bricks_Hn}.

	Now let us obtain \eqref{f:bricks_Hn_inside}.
	Take $[x,y,z], [x',y',z'] \in  \mathcal{A}$ and by the group low \eqref{f:Heisenberg_low}  we need to solve the equation 
	\[
	z + z' + x_1 y'_1 + \dots + x_n y'_n = \la \,, \quad \quad z,z'\in Z\,, \quad x_i, x'_i \in X_i \,, \quad  y_i, y'_i \in Y_i
	\]
	for any $\la$. 
	We consider even $n$ only  (recall that we assume that $n\ge 2$) and denote by $\sigma_{n/2}$ the number of the solutions to the last equation. 
	Almost repeating the proof of \cite[Theorem 32]{sh_as} (also, see \cite[Remark 33]{sh_as}), one obtains  an asymptotic formula for $\sigma_k$, namely,
	\begin{equation}\label{tmp:03.07_1}
	\sigma_k - \frac{|Z|^2 |X| |Y|}{p} \lesssim  |Z|^{2-2^{-k}} (\mathcal{X} \mathcal{Y})^{2k-1 + 2^{-k}} \,.
	\end{equation} 
	Indeed, by Theorem \ref{t:Misha+} we know (thanks to $|Z| \le \mathcal{X} \mathcal{Y}$, $\mathcal{X} \le |Z| \mathcal{Y}$, $\mathcal{Y} \le |Z| \mathcal{X}$) that  
	\[
	\sigma_1 - \frac{|Z|^2 |X| |Y|}{p} \ll (\mathcal{X} \mathcal{Y} |Z|)^{3/2}
	\]
	and that the recurrent formula for the error term $\mathcal{E}_k$ in the right--hand side of  \eqref{tmp:03.07_1} is 
	\[
	\mathcal{E}_{k+1} \ll (\mathcal{X} \mathcal{Y})^{3/2} \mathcal{E}^{1/2}_k \cdot |Z| (\mathcal{X} \mathcal{Y})^k \,.
	\]
	Again we need to use our conditions  $|Z| \le \mathcal{X} \mathcal{Y}$, $\mathcal{X} \le |Z| \mathcal{Y}$, $\mathcal{Y} \le |Z| \mathcal{X}$ and 
	induction 
	similar to the proof of \cite[Theorem 32]{sh_as}. 
	Thus asymptotic formula \eqref{tmp:03.07_1} takes place  and $\sigma_k$ is positive  if 
	\[
	\mathcal{X} \mathcal{Y} \gtrsim  p (\mathcal{X} \mathcal{Y} |Z|^{-1})^{2^{-k}} \,.
	\]	
	This completes the proof.
	$\hfill\Box$
\end{proof}

\bigskip

Let us compare condition \eqref{cond:z_Heisenberg} of Theorem \ref{t:z_Heisenberg} (condition (\ref{cond:z_Aff}) of Corollary \ref{c:z_Aff}) and Theorem \ref{t:bricks_Hn} namely, formula \eqref{f:bricks_Hn_inside} from the proof. Theorem \ref{t:z_Heisenberg} concerns general sets but condition  \eqref{f:bricks_Hn_inside} is exponentially better than \eqref{cond:z_Heisenberg}.  
For concrete families of sets one can prove similar exponentially small bounds.
Consider, for example, a brick $\mathcal{A} = \{(a,b) ~:~ a,b\in A\} \subseteq \Aff (\F_p)$, $A\subseteq \F^*_p$ 
and give the sketch of the proof of the existence of this decay (see details in \cite[Remark 34]{sh_as} and in \cite[Theorem 11]{RS_SL2}). 
Put $\mathcal{A}^n = \{ (a_n, b_n) \} $ and by the group low we know that $a_{n+1} = a_n a$, $b_{n+1} = a_n b + b_{n}$, where $a,b\in A$.  
Using the last recursive formula and the arguments as in \cite[Theorem 32]{sh_as} to solve the equation $a_n b + b_{n} = a'_n b' + b'_{n}$, 
we obtain in $\R$ (but similar in $\F_p$) that for any $\mathcal{B}$ from the affine group one has $|\mathcal{B}\mathcal{A}| \gg |A|^{3/2} |\mathcal{B}|^{1/2}$ and this implies the exponential decay. 





\section{Concluding remarks}
\label{sec:concluding}


In 
this section we discuss some further connections between the sum--product phenomenon and growth in the Heisenberg group.

In Theorem \ref{t:energy_bricks_Heisenberg} we have deal with the term  $\sigma_2$. 
It is easy to see that this quantity is  just 
$
\sum_{\la, \mu} \E^\times (A^{+}_\la, A^{+}_\mu) \,,
$
where $A^{+}_\la = A \cap (\la - A)$. 
Hence we have estimated this expression as well. 
In a dual way one can consider
$
\sum_{\la, \mu} \E^+ (A^{\times}_\la, A^{\times}_\mu) \,,
$
where $A^{\times}_\la = A \cap \la A^{-1}$ or, similarly,  $A^{\times}_\la = A \cap \la A$.
Then we have the correspondent analogue of system \eqref{f:system_H}, namely, 
\[
a a_1 = a' a'_1 \,, \quad b b_1 = b' b'_1 \,, \quad a+b_1 = a'+b'_1 \,.
\] 
It gives $b_1 b/b' - b_1 = a' a'_1/a_1 - a'$ (and the remaining variables $a,b'_1$ can be find uniquely) and hence again this can be bounded as $|A|^6/p + O(|A|^{9/2})$ in $\F_p$ and as $O(|A|^{9/2-c})$ in $\R$, where $c>0$ is an absolute constant, see \cite{RS_SL2}.   

In a similar way, one can consider the problem of estimating the quantities 
\begin{equation}\label{f:mixed_sum-prod}
\sum_{\la} \E^{\times} (A^{+}_\la) \,, \quad \quad \sum_{\la} \E^{+} (A^{\times}_\la) \,.
\end{equation}
The first one naturally appears in sum--product questions in $\R$ which are connected with Solymosi's argument \cite{soly}, see, e.g., \cite{KS1}.
As in Theorem \ref{t:energy_bricks_Heisenberg}, we see that the first sum equals the number solutions to the system 
\[
a+a_1 = a'+a'_1=b+b_1=b'+b'_1 \,, \quad \quad ab_1 = a'b'_1 
\]
hence as above $\frac{a'}{b_1} = \frac{a'_1-a_1}{b-b'}$ and
after some calculations we arrive to 
\begin{equation}\label{f:5_var}
(b+b_1-a'_1) (b-b') = b_1(a'_1-a_1) \,.
\end{equation}
Now we can estimate the number solutions to the last equation rather roughly.
Indeed, if we fix a variable, say, $b_1$, then relatively to $a_1,b'$ we have an equation of a line. 
Hence the Szemer\'{e}di--Trotter Theorem \cite{ST} gives us 
$\sum_{\la} \E^{\times} (A^{+}_\la) \ll |A|^{11/3}$
and similar in $\F_p$ via Theorem \ref{t:line/point_as}. 
One can estimate the number solutions to \eqref{f:5_var} further  via the Cauchy--Schwarz and different energies.

As for the  dual question,  it is easy to see that $\sum_{\la} \E^{+}  (A \cap \la A^{-1}) \le |A| \E^{+} (A^{-1})$ and 
$\sum_{\la} \E^{+}  (A \cap \la A^{}) \le |A| \E^{+} (A)$ because the map $(x,y,z,w,\la)_{x+y=z+w} \to \la^{-1} (x,y,z,w)_{x+y=z+w}$ has at most $|A|$ preimages. 
Thus in this case nothing interesting happens and one needs a deeper technique to estimate the sum.

\bigskip

{\bf Problem.} Estimate the sum--product quantities \eqref{f:mixed_sum-prod} in $\R$ and in $\F_p$ (for small $A$). 
We suppose that the correct bound is $O(|A|^{3+\eps})$ for an  arbitrary $\eps>0$.

\bigskip

\noindent{I.D.~Shkredov\\
	Steklov Mathematical Institute,\\
	ul. Gubkina, 8, Moscow, Russia, 119991}
\\
and
\\
IITP RAS,  \\
Bolshoy Karetny per. 19, Moscow, Russia, 127994\\
and 
\\
MIPT, \\ 
Institutskii per. 9, Dolgoprudnii, Russia, 141701\\
{\tt ilya.shkredov@gmail.com}

\end{document}